\newtheorem{definition}{Definition}[section]
\newtheorem{corollary}{Corollary}[section]
\newtheorem{theorem}{Theorem}[section]
\newtheorem{lemma}{Lemma}[section]
\newtheorem{conjecture}{Conjecture}[section]
\newtheorem{remark}{Remark}[section]
\newtheorem{example}{Example}[section]
\theoremstyle{definition}
\theoremstyle{remark}
\newcommand{\Addresses}{{
\bigskip
\footnotesize
Sho Suda, \textsc{Department of Mathematics, National Defense Academy of Japan, Yokosuka, Japan}\par\nopagebreak
\textit{E-mail address}, Sho Suda: \texttt{ssuda@nda.ac.jp}

\medskip

Zili Xu, \textsc{School of Mathematical Sciences, East China Normal University, Shanghai 200241, China; Shanghai Key Laboratory of PMMP, East China Normal University, Shanghai 200241, China; and Key Laboratory of MEA, Ministry of Education, East China Normal University, Shanghai 200241, China}\par\nopagebreak
\textit{E-mail address}, Zili Xu: \texttt{zlxu@math.ecnu.edu.cn}

\medskip

Wei-Hsuan Yu, \textsc{Mathematics Department, National Central University, Taoyuan, Taiwan}\par\nopagebreak
\textit{E-mail address}, Wei-Hsuan Yu: \texttt{u690604@gmail.com}
}}
\date{}
\begin{document}

\title{Existence and nonexistence of spherical $5$-designs of minimal type}
\date{\today}

\author{Sho Suda, Zili Xu, and Wei-Hsuan Yu}

\maketitle

\abstract{

This paper investigates the existence and properties of  spherical $5$-designs of minimal type. We focus on two cases: tight spherical $5$-designs and antipodal spherical $4$-distance $5$-designs. 
We prove that a tight spherical $5$-design is of minimal type if and only if it possesses a specific $Q$-polynomial coherent configuration structure. 
For tight spherical $5$-designs in $\mathbb{R}^d$ of minimal type, we demonstrate that half of the derived code forms an equiangular tight frames (ETF) with parameters $(d-1, \frac{(d-1)(d+1)}{3})$.
This provides a sufficient condition for constructing such ETFs from maximal ETFs with parameters $(d, \frac{d(d+1)}{2})$. 
Moreover, we establish that tight spherical $5$-designs of minimal type cannot exist if the dimension $d$ satisfies a certain arithmetic condition, which holds for infinitely many values of $d$, including $d=119$ and $527$.
For antipodal spherical $4$-distance $5$-designs, we utilize valency theory to derive necessary conditions for certain special types of antipodal spherical $4$-distance $5$-designs to be of minimal type.
}

\section{Introduction}

\subsection{Spherical designs}
A finite set $X\subset \mathbb{S}^{d-1}:=\{x\in\mathbb{R}^d\ |\ \langle x,x\rangle={1}\}$ is called a spherical
$t$-design if the following equality
\begin{equation*}
\int_{\mathbb{S}^{d-1}}f(  {x})d\mu_d(  {x})=\frac{1}{|X|}\sum_{  {x}\in X}^{}f(  {x})
\end{equation*}
holds for all polynomial $f$ of degree at most $t$ \cite{DGS77}. 
Here, $\mu_d$ is the Lebesgue measure on $\mathbb{S}^{d-1}$ normalized by $\mu_d(\mathbb{S}^{d-1})=1$.
It is well known that the size of a spherical $t$-design $X\subset \mathbb{S}^{d-1}$ satisfies the following lower bound \cite{DGS77}:
\begin{equation*}
|X|\geq \begin{cases}
\binom{d+s-1}{s}+\binom{d+s-2}{s-1} & \text{if $t=2s$}	,\\
2\binom{d+s-1}{s} & \text{if $t=2s+1$}	.
\end{cases}	
\end{equation*}
If this bound is attained, then we say that $X$ is a tight spherical $t$-design. 
The existence of tight spherical designs has been studied by bulk of papers \cite{BB09b,BMV04}.

\subsection{Spherical $5$-designs of minimal type}

In this paper we investigate a specific class of spherical $5$-designs, referred to as {\it spherical $5$-designs of minimal type}. 

\begin{definition}{\bf (Spherical $5$-designs of minimal type)}
Let $D\subset\mathbb{S}^{d-1}$ be a spherical $5$-design.
We say $D$ is a spherical $5$-design of minimal type
if there exists an $\alpha\in \mathbb{R}^d$ such that $\langle \alpha,x\rangle\in\{0,\pm 1\}$ for any $x\in D$.
\end{definition}

We remark that spherical $5$-designs of minimal type are closely related to the notions of strongly perfect lattices of minimal type and $m$-stiff configurations. 

\begin{remark}\label{remark-strongly perfect lattices}
A lattice $L$ is called strongly perfect if $\mathrm{Min}(L)$ forms a spherical $4$-design, where
\begin{equation*}
\mathrm{Min}(L)=\{x\in L \mid \langle x,x\rangle=\min(L)\}
\quad\text{and}\quad 
\min(L)=\min_{x\in L,x\neq 0} \langle x,x\rangle.
\end{equation*}
For a strongly perfect lattice $L$ in $\mathbb{R}^d$,
its Berg\'e-Martinet invariant $(\gamma')^2(L)$ satisfies  
\begin{equation}\label{minimal type lattice}
(\gamma')^2(L):=\min(L)\min(L^*)\geq \frac{d+2}{3},	
\end{equation}
where $L^*=\{x\in\mathbb{R}^{d}\mid \langle x,y\rangle\in\mathbb{Z}, \forall y\in L\}$ is the dual lattice of $L$.
The equality in \eqref{minimal type lattice} holds if and only if there exists an $\alpha\in \mathrm{Min}(L^*)$ such that \cite[Theorem 10.4]{Ven01} 
\begin{equation*}
\langle \alpha,x\rangle\in\{0,\pm 1\}, \ \forall x\in \mathrm{Min}(L).
\end{equation*}
A strongly perfect lattice $L$ is called of minimal type if the lower bound \eqref{minimal type lattice} is attained \cite{Ven01,NV00,Mar13}.
It is easy to check that a lattice $L$ is a strongly perfect lattice of minimal type if and only if $\mathrm{Min}(L)$ forms a spherical $5$-design of minimal type.
	
\end{remark}

\begin{remark}
\textcolor{black}{
A finite set $X$ in $S^{d-1}$ is called an $m$-stiff if $X$ is a  spherical $(2m-1)$-design and there exists a vector $z\in S^{d-1}$ such that $|\{\langle z,x\rangle \mid x\in X\}|\leq m$, see \cite{BKN26,B22,B24,FL95}.   
Notably, a spherical $5$-design of minimal type is a special class of a $3$-stiff configuration. 
Recently, Borodachov established a strong connection between $m$-stiff configurations and energy minimization problems \cite{B22,B24}.
The authors in \cite{BKN26} provided some nonexistence results for $m$-stiff when $m$ is large.
However, for small values of $m$, the existence of $m$-stiff configurations remains largely unsolved.
}

\end{remark}

\subsection{Equiangular tight frames}
The optimal line packing problem aims to find a finite set
$X=\{ x_i \}_{i=1}^{n}\subset\mathbb{S}^{d-1}$ with fixed size $n>d$, such that the coherence $\mu(X):=\max\limits_{i\neq j}|\langle x_i, x_j\rangle|$ is minimized (see
\cite{Conway,Fickus2,Haas,Jasper}). 
The following is the Welch bound on the coherence \cite{Welch}: 
\begin{equation}\label{welch}
\mu(X)\,\,\geq\,\, \theta_{n,d}:=\sqrt{\frac{n-d}{d(n-1)}}.
\end{equation}
Denote the angle set of $X$ by $A(X)$, i.e., $A(X):=\{\langle x,y\rangle \mid x,y\in X,x\neq y\}$.
If a set $X\subset\mathbb{S}^{d-1}$ attains the the Welch bound, then we have $d\leq n\leq \frac{d(d+1)}{2}$, and $X$ forms a tight frame with $A(X)=\{\pm\theta_{n,d}\}$. 
For this reason, $X$ is usually referred to as an equiangular tight frame (ETF) with parameters $(d,n)$, and is denoted as ETF$(d,n)$. 

An ETF with parameters $(d,\frac{d(d+1)}{2})$ is called a maximal ETF. 
It is well known that a set $X\subset\mathbb{S}^{d-1}$ is a maximal ETF if and only if $X\cup -X$ forms a tight spherical $5$-design \cite{BB09b}. 
When $d>3$, maximal ETFs exist only when $d=k^2-2$ for some odd integer $k$. 
To date, in the case $d>3$, maximal ETFs are known to exist only
when $(d, n) = (7, 28)$ and $(23, 276)$, respectively. 
Additionally, there are known ETFs with parameters $(d, n) = (6, 16)$ and $(22, 176)$. 
Motivated by these examples, the following conjecture was proposed in \cite[Conjecture~4]{XXY21}.

\begin{conjecture}\label{conj1}
Let $d>3$. The existence of the following are equivalent.
\begin{enumerate}
	\item [\rm (i)] An ETF with parameters $(d,\frac{d(d+1)}{2})$.
	\item [\rm (ii)] An ETF with parameters $(d-1,\frac{(d-1)(d+1)}{3})$.
\end{enumerate}
	
\end{conjecture}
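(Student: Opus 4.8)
The natural strategy is to route both ETFs through the theory of tight spherical $5$-designs, using the equivalence recalled above that a set $X\subset\mathbb{S}^{d-1}$ is a maximal ETF if and only if $D:=X\cup-X$ is a tight spherical $5$-design; such a $D$ has exactly $d(d+1)$ points and angle set $\{-1,\pm(d+2)^{-1/2}\}$. The plan is to establish the implication (i) $\Rightarrow$ (ii) by a slicing construction, and to isolate precisely the obstruction that blocks a proof of the full equivalence. For (i) $\Rightarrow$ (ii), I would start from a maximal ETF $X$, form $D$, and work under the hypothesis that $D$ is of minimal type, i.e.\ that there is an $\alpha$ with $\langle\alpha,x\rangle\in\{0,\pm1\}$ for all $x\in D$.

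Granting minimal type, the key is to extract the correct slice. Since $\langle\alpha,x\rangle^2\in\{0,1\}$ forces $\langle\alpha,x\rangle^2=\langle\alpha,x\rangle^4$ pointwise, I would compare the second and fourth moments $\sum_{x\in D}\langle\alpha,x\rangle^2$ and $\sum_{x\in D}\langle\alpha,x\rangle^4$, both evaluated against the sphere via the $5$-design property (using $\int\langle\alpha,y\rangle^2\,d\mu=|\alpha|^2/d$ and $\int\langle\alpha,y\rangle^4\,d\mu=3|\alpha|^4/(d(d+2))$). Equating them pins down $|\alpha|^2=(d+2)/3$, exactly the value appearing in the Berg\'e--Martinet bound \eqref{minimal type lattice}. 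I would then set $D_0:=\{x\in D:\langle\alpha,x\rangle=0\}$, an antipodal subset of the hyperplane $\alpha^{\perp}\cong\mathbb{R}^{d-1}$; a moment count gives $|D_0|=\tfrac{2(d-1)(d+1)}{3}$, so $D_0=Y\cup-Y$ with $|Y|=\tfrac{(d-1)(d+1)}{3}$, the desired cardinality.

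It remains to verify that $Y$ is an ETF$(d-1,\tfrac{(d-1)(d+1)}{3})$. Equiangularity is inherited for free: every pair of distinct non-antipodal points of $D_0$ already has inner product $\pm(d+2)^{-1/2}$, and I would check that $(d+2)^{-1/2}$ is precisely the Welch bound $\theta_{n',d'}$ for $d'=d-1$, $n'=\tfrac{(d-1)(d+1)}{3}$. For the tight-frame property I would project the $2$-design isotropy relation $\sum_{x\in D}xx^{\top}=\tfrac{|D|}{d}I_d$ onto $\alpha^{\perp}$ and separately evaluate $\sum_{x\in D}\langle\alpha,x\rangle^2\langle\beta,x\rangle^2$ for an arbitrary unit $\beta\in\alpha^{\perp}$; the $5$-design property makes the latter equal to $\tfrac{d+1}{3}$ independently of $\beta$, which forces $\sum_{x\in D_0}xx^{\top}$ to be a scalar multiple of the projection onto $\alpha^{\perp}$. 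Hence $Y$ is a tight frame, completing the construction.

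The main obstacle is twofold, and it is exactly what separates this argument from a proof of the conjecture. First, the hypothesis that $D$ is of minimal type is not automatic: it is equivalent to $D$ carrying the specific $Q$-polynomial coherent configuration structure, and the arithmetic nonexistence results (e.g.\ $d=119,527$) show that this property genuinely fails in infinitely many candidate dimensions, so the slicing construction yields only a \emph{sufficient} condition rather than the implication (i) $\Rightarrow$ (ii) in general. Second, and more seriously, the converse (ii) $\Rightarrow$ (i) is the genuinely hard direction: reconstructing the $\tfrac{d(d+1)}{2}$-point maximal ETF in $\mathbb{R}^d$ from an ETF$(d-1,\tfrac{(d-1)(d+1)}{3})$ requires adjoining the missing coordinate and rebuilding the slices $D_{\pm}=\{x:\langle\alpha,x\rangle=\pm1\}$, for which there is no evident canonical recipe. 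I therefore expect the reverse direction to demand the full coherent-configuration machinery rather than a direct dimension-lifting argument, which is why the conjecture remains open.
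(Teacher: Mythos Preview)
The statement is a \emph{conjecture}, and the paper does not prove it; it only establishes the partial implication (i)~$\Rightarrow$~(ii) under the additional hypothesis that the tight spherical $5$-design $D=X\cup-X$ is of minimal type (Corollary~\ref{coro1}). You correctly recognise this and structure your proposal accordingly, isolating both the minimal-type hypothesis as the gap in (i)~$\Rightarrow$~(ii) and the absence of any lifting construction for (ii)~$\Rightarrow$~(i).

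Your argument for the partial result is essentially the same as the paper's, but with one methodological difference worth noting. The paper deduces that the slice $D_0=\mathcal{L}_{\alpha,0}(D)$ is a spherical $3$-design in $\mathbb{S}^{d-2}$ by invoking the general derived-code theorem (Theorem~\ref{thm:derived}), and then reads off the ETF property from the angle set and the Welch bound. You instead verify the tight-frame property of $Y$ directly, by computing $\sum_{x\in D}\langle\alpha,x\rangle^2\langle\beta,x\rangle^2$ via the $4$-design identity and subtracting from the isotropy relation; since the resulting quadratic form $\beta\mapsto\sum_{x\in D_0}\langle\beta,x\rangle^2$ is constant on the unit sphere of $\alpha^\perp$, polarisation gives the scalar-multiple-of-identity conclusion. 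Your route is slightly more hands-on and avoids the derived-code machinery, while the paper's route yields the stronger statement that $D_0$ is a $3$-design (not merely a tight frame), which it then also uses elsewhere. Both are correct and lead to the same conclusion for the ETF claim.
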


In this paper, we  show that if $X\subset\mathbb{S}^{d-1}$ $(d>7)$ is a maximal ETF and $X\cup -X$ forms a tight spherical $5$-design of minimal type,  
then there exists an ETF with parameters $(d-1,\frac{(d-1)(d+1)}{3})$ (see Corollary \ref{coro1}).  
This provides a sufficient condition for (i) implying (ii) in Conjecture \ref{conj1}.

\subsection{Our contributions}

In this paper we investigate the existence and properties of spherical $5$-designs of minimal type. 
We study the condition when a spherical $5$-design is of minimal type. 
The analysis concentrates on two specific cases: tight spherical $5$-designs and antipodal spherical $4$-distance $5$-designs.

\subsubsection{Tight spherical $5$-designs of minimal type}

We first consider the case when $D\subset\mathbb{R}^d$ is a tight spherical $5$-design. 
Recall that in this case, we have $d=2,3$ or $d=(2m+1)^2-2$ where $m$ is a positive integer,
and we can write $D=X\cup -X$, where $X$ is a maximal ETF with parameters $(d,\frac{d(d+1)}{2})$ \cite{BB09b}.

We establish that $D$ is of minimal type if and only if $D$ has a structure of $Q$-polynomial coherent configurations with specific  parameters described in Theorem \ref{thm:cc} (see  Section~\ref{sec:qcc} for $Q$-polynomial coherent configurations).
We also show that if $D$ is of minimal type, then a half of the derived code of $D$ forms an ETF with parameters $(d-1,\frac{(d-1)(d+1)}{3})$ (see Corollary \ref{coro1}).  
This provides a sufficient condition under which a maximal ETF with parameters $(d,\frac{d(d+1)}{2})$ gives rise to an ETF with parameters $(d-1,\frac{(d-1)(d+1)}{3})$.

Moreover, Theorem \ref{th1} presents a necessary condition for the existence of tight spherical 5-designs of minimal type.
We show that $D$ cannot be of minimal type if the dimension $d$ satisfies certain arithmetic conditions, which hold for infinitely many values of $d$.
For example, Theorem \ref{th1} rules out the possibility of $D$ being of minimal type for $d=119$ and $527$.
Table \ref{table-tight spherical 5 design} summarizes the known results on the existence of tight spherical $5$-designs of minimal type in $\mathbb{R}^d$, where $d=2,3$ and $(2m+1)^2-2$ with $1\leq m\leq 14$.

\begin{table}[!h]
\caption{Tight spherical $5$-designs  in $\mathbb{R}^d$ with $d=2,3$ and $(2m+1)^2-2$ with $1\leq m\leq 14$.}
\centering
\label{table-tight spherical 5 design}
\begin{tabular}{llll}
 $d$  & Tight spherical $5$-design exists?   & Minimal type?    \\ \hline
    2    & Yes \cite{Mun06}     & Yes  [Example \ref{tight-example}]        \\ \hline
    3    & Yes \cite{Mun06}      & No  [Example \ref{tight-example}]       \\ \hline

 7    & Yes \cite{Mun06,CS88}     & Yes  [Example \ref{tight-example}]        \\ \hline
  23    & Yes \cite{Mun06}      & Yes  [Example \ref{tight-example}]       \\ \hline
 47            & No \cite{Mak02,BMV04,NV13}  \\ \hline
 79             & No \cite{BMV04,NV13}    \\ \hline
  119    & Unknown          & No [Theorem \ref{th1}]  \\ \hline
  167        & No \cite{NV13} \\ \hline
  223         & Unknown          & Unknown        \\ \hline
  287       & Unknown        & Unknown                  \\ \hline
  359        & Unknown        &Unknown            \\ \hline
 439        & No \cite{BMV04,NV13}     &    \\ \hline
  527        & Unknown       & No  [Theorem \ref{th1}]  \\ \hline
 623       & No \cite{NV13}        &             \\ \hline
  727       & Unknown    &  Unknown   \\ \hline
  839       & Unknown    & Unknown    \\ \hline
\end{tabular}
\end{table}

\subsubsection{Antipodal spherical $4$-distance $5$-designs of minimal type}

We next focus on the case when $D$ is an antipodal spherical $4$-distance $5$-design. 
Recall that a set $X$ is called an $s$-distance set if its angle set $A(X)$ has size $s$.
We give an equivalent condition on $D$ being of minimal type (see Theorem \ref{thm:levenstein}). 
In section \ref{section4} we utilize valency theory to derive necessary conditions for certain special types of antipodal spherical $4$-distance $5$-designs to be of minimal type.
Table \ref{Levenstein-equality packing} lists the known results on the existence of  antipodal spherical $4$-distance $5$-designs of minimal type.

Note that a spherical $5$-design of minimal type is a special class of $3$-stiff, and the existence of $m$-stiff for small $m$ is largely unknown.
Our results complements the results in \cite{BKN26}.

\begin{table}[!h]
\caption{List of all known antipodal spherical $4$-distance $5$-designs $D$ in $\mathbb{R}^d$.}
\centering
\label{Levenstein-equality packing}
\begin{tabular}{lllll}
 $d$ & $|D|$   & Description  & Minimal type?  \\ \hline
  4  &  24     & $\mathrm{Min}(\mathbb{D}_4)$  & Yes  [Example \ref{antipodal-example}]      \\ \hline
 6  &   72            &  $\mathrm{Min}(\mathbb{E}_6)$  &  Yes [Example \ref{antipodal-example}]   \\ \hline
 7 &   126       & $\mathrm{Min}(\mathbb{E}_7)$   & Yes [Example \ref{antipodal-example}]   \\ \hline
 8 &  240      &  $\mathrm{Min}(\mathbb{E}_8)$      & No [Theorem \ref{tight7design}]      \\ \hline
 $16$  &   $288$     & Maximal real MUBs    &   No  [Example \ref{example-16-144}]    \\ \hline
 16  &    512           & $\mathrm{Min}(\mathbb{O}_{16})$  &  Yes [Example \ref{antipodal-example}]             \\ \hline
22  &   2816       &  $\mathrm{Min}(\mathbb{O}_{22})$    &    Yes [Example \ref{antipodal-example}]        \\ \hline
 23 &  4600       &  $\mathrm{Min}(\mathbb{O}_{23})$     & No [Theorem \ref{tight7design}]        \\ \hline
  $4^s (s\geq3)$  &   $d(d+2)$    & Maximal real MUBs    &   Unknown    \\ \hline
\end{tabular}
\end{table}

\section{Preliminary}

\subsection{Notations}

Let $\text{Hom}_{k}(\mathbb{R}^d)$ be the subspace of all real homogeneous polynomials of degree $k$ on $d$ variables.
Let $\text{Harm}_{k}(\mathbb{R}^d)$ be the  vector space of all real homogeneous harmonic
polynomials of degree $k$ on $d$ variables, equipped with the standard inner product
\begin{equation*}
\langle f,g\rangle=\int_{\mathbb{S}^{d-1}}f(  {x})g(  {x})\text{d}\mu_d(  {x})
\end{equation*}
for $f,g\in\text{Harm}_{k}(\mathbb{R}^d)$. 
It is known that the dimension of $\text{Harm}_{k}(\mathbb{R}^d)$ is $h_k$, where $h_k$ is defined as (see \cite[Theorem 3.2]{DGS77})
\begin{equation*}
h_0:=1,\quad h_1:=d,\quad h_k:=\binom{d+k-1}{k}-	\binom{d+k-3}{k-2},\ \forall k\geq 2.
\end{equation*}
Let $\{\phi_{k,i}^{(d)}\}_{i=1}^{h_k}$ be an orthonormal basis for $\text{Harm}_{k}(\mathbb{R}^d)$.

Let $G_k^{(d)}(x)$ denote the
Gegenbauer polynomial of degree $k$ with the normalization $G_k^{(d)}(1)=h_k$, which can be
defined recursively as follows (see also \cite[Definition 2.1]{DGS77}):
\begin{equation*}
G_0^{(d)}(x):=1,\ G_1^{(d)}(x):=d\cdot x,
\end{equation*}
\begin{equation*}
\frac{k+1}{d+2k}\cdot G_{k+1}^{(d)}(x)=x\cdot G_k^{(d)}(x)-\frac{d+k-3}{d+2k-4}\cdot G_{k-1}^{(d)}(x),\ k\geq 1.
\end{equation*}
The following formulation  is  well-known  \cite[Theorem 3.3]{DGS77}:
\begin{equation*}
G_k^{(d)}(\langle   {x},  {y}\rangle)=\sum\limits_{i=1}^{h_k}\phi_{k,i}^{(d)}(  {x})\phi_{k,i}^{(d)}(  {y}),\
\text{ for }\   {x},  {y}\in\mathbb{S}^{d-1},\  k\in\mathbb{Z}_+.
\end{equation*}

We also need the following notations.

\begin{definition}\label{alldef}
\begin{enumerate}
\item[{\rm (i)}] 
For a finite non-empty set $X\subset\mathbb{S}^{d-1}$, the $k$-th characteristic matrix ${H}_k$ of size $|X|\times h_k$ is defined as  {\rm (see \cite[Definition 3.4]{DGS77})}
\begin{equation*}
{H}_k:=(\phi_{k,i}^{(d)}({x})), \   {x}\in X,\ i\in\{1,2,\ldots,h_k\}.
\end{equation*}

\item[{\rm (ii)}] For mutually disjoint non-empty finite subsets $X_1, X_2,\ldots, X_n\subset \mathbb{S}^{d-1}$, after suitably rearranging the elements of $X=\bigcup_{i=1}^{n}X_i$, we write the $k$-th characteristic matrix $  {H}_{k}$ of $X$ as
\begin{equation*}
{H}_{k}=\begin{pmatrix}
{H}_{k}^{(1)}\\
{H}_{k}^{(2)}\\
\vdots \\
{H}_{k}^{(n)}
\end{pmatrix}=\sum_{i=1}^n  \widetilde{H}_{k}^{(i)},
\end{equation*}
where ${H}_{k}^{(i)}$ is the $k$-th characteristic matrix of $X_i$ and
\begin{equation*}
\widetilde{H}_{k}^{(i)}:=  {e}_i \otimes  {H}_{k}^{(i)}\in\mathbb{R}^{|X|\times h_k}.
\end{equation*}
Here, ${e}_i\in\mathbb{R}^{n}$ is the vector whose $i$-th entry is $1$ and all other entries are $0$. 
For $i\in\{1,2,\ldots,n\}$ we define $  \Delta_{X_i}\in\mathbb{R}^{|X|\times |X|}$ as the diagonal matrix whose $(  {x},  {x})$-entry equals $1$ if $  {x}\in X_i$ and equals zero otherwise. For any $i,j\in\{1,2,\ldots,n\}$ we define $ {J}_{X_i,X_j}\in\mathbb{R}^{|X|\times |X|}$ as the matrix whose $(  {x},  {y})$-entry equals $1$ if $  {x}\in X_i,   {y}\in X_j$ and equals zero otherwise.  

\item[{\rm (iii)}] For two finite sets $X,Y\subset \mathbb{S}^{d-1}$, we define the angle set between $X$ and $Y$ as 
\begin{equation*}
A(X,Y)=\{\langle   {x},  {y}\rangle \mid   {x}\in X,  {y}\in Y,   {x}\neq   {y}\}.
\end{equation*}
If $X=Y$ then we write $A(X)=A(X,X)$.

\item[{\rm (iv)}] For non-empty finite subsets $X_1, X_2,\ldots, X_n\subset \mathbb{S}^{d-1}$, we define the intersection numbers on $X_j$ for $  {x},  {y} \in  \mathbb{S}^{d-1}$ by
\begin{equation*}
p^j_{\alpha,\beta}(  {x},   {y}):=| \{   {z}\in X_j | \langle   {z},  {x}\rangle =\alpha,  \langle   {z},  {x}\rangle =\beta  \} |.
\end{equation*}

\end{enumerate}

\end{definition}

Throughout this paper, we use $I$ to denote the identity matrix of appropriate size, and we denote $\varepsilon_{i,j}:=1-\delta_{i,j}$ where
\begin{equation*}
\delta_{i,j}:=\begin{cases}
1 & \text{if $i=j$},\\
0 & \text{if $i\neq j$}.	
\end{cases}
\end{equation*}

\subsection{Spherical designs}

By the notion of characteristic matrices, the following lemma provides two equivalent definitions of spherical $t$-designs.

\begin{lemma}{\rm (see \cite[Theorem 5.3]{DGS77})}
Let $X\subset\mathbb{S}^{d-1}$ be a non-empty set, and let $H_k$ be the $k$-th characteristic matrix of $X$ for each integer $k\geq 0$.
Then $X$ forms a spherical $t$-design if and only if any one of  the following holds:
\begin{enumerate}[{\rm (i)}]
\item $  {H}_k^\top\cdot   {H}_0=  {0}_{h_k\times 1},\ k=1,2,\ldots,t$.

\item $  {H}_k^\top \cdot   {H}_l=|X|\cdot \delta_{k,l}\cdot    {I}$ when $0\leq k+l\leq t$.

\end{enumerate}
\end{lemma}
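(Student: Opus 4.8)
The plan is to prove the equivalence of two characterizations of spherical $t$-designs via the characteristic matrices, working directly from the integral definition. I would begin by recalling the key orthogonality relation for Gegenbauer polynomials with respect to the normalized measure $\mu_d$, namely
\begin{equation*}
\int_{\mathbb{S}^{d-1}}\phi_{k,i}^{(d)}(x)\,\phi_{l,j}^{(d)}(x)\,d\mu_d(x)=\delta_{k,l}\,\delta_{i,j},
\end{equation*}
which encodes the fact that the harmonic polynomials $\{\phi_{k,i}^{(d)}\}$ form an orthonormal basis of $\mathrm{Harm}_k(\mathbb{R}^d)$ and that harmonic spaces of distinct degrees are mutually orthogonal. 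The strategy throughout is to express the relevant matrix entries as averages of products $\phi_{k,i}^{(d)}(x)\phi_{l,j}^{(d)}(x)$ over the finite set $X$ and then use the design property to replace these averages by the corresponding integrals.

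First I would unwind the two matrix conditions. The $(i,j)$-entry of $\tfrac{1}{|X|}H_k^\top H_l$ is precisely $\tfrac{1}{|X|}\sum_{x\in X}\phi_{k,i}^{(d)}(x)\,\phi_{l,j}^{(d)}(x)$, and since each product $\phi_{k,i}^{(d)}\phi_{l,j}^{(d)}$ is a polynomial of degree $k+l$, the design definition applied to $f=\phi_{k,i}^{(d)}\phi_{l,j}^{(d)}$ (valid whenever $k+l\leq t$) equates this average with $\int_{\mathbb{S}^{d-1}}\phi_{k,i}^{(d)}\phi_{l,j}^{(d)}\,d\mu_d=\delta_{k,l}\delta_{i,j}$. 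This immediately gives that if $X$ is a $t$-design then condition (ii) holds, and since $H_0$ is the all-ones column and $\phi_{0,1}^{(d)}\equiv 1$, condition (i) is the special case $l=0$, $1\le k\le t$. So both (i) and (ii) follow from the design property.

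The main content is the reverse implication, and here condition (i) is the tighter hypothesis to work from, so I would show (i) $\Rightarrow$ $X$ is a $t$-design; the implication from (ii) is then immediate since (ii) with $l=0$ contains (i). The key tool is that every polynomial $f$ of degree at most $t$ decomposes, after restriction to the sphere, into harmonic components $f=\sum_{k=0}^{t}\sum_{i}c_{k,i}\,\phi_{k,i}^{(d)}$ (using the standard decomposition of $\mathrm{Hom}_k$ into harmonics times powers of $\langle x,x\rangle=1$). By linearity it suffices to verify the design equation on each basis element $\phi_{k,i}^{(d)}$. For $k=0$ the equation is the trivial normalization $\tfrac{1}{|X|}\sum_{x}1=1=\int 1\,d\mu_d$, and for $1\le k\le t$ condition (i) says exactly $\tfrac{1}{|X|}\sum_{x\in X}\phi_{k,i}^{(d)}(x)=0$, which matches $\int_{\mathbb{S}^{d-1}}\phi_{k,i}^{(d)}\,d\mu_d=0$ by orthogonality to the constant $\phi_{0,1}^{(d)}$. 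Thus the averaging identity holds for every $f$ of degree at most $t$, completing the equivalence.

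I expect the only genuine obstacle to be the harmonic decomposition step in the reverse direction: one must justify that restricting a degree-$t$ polynomial to $\mathbb{S}^{d-1}$ yields an element of $\bigoplus_{k=0}^{t}\mathrm{Harm}_k(\mathbb{R}^d)$, so that the orthonormal harmonic basis genuinely spans the space of test functions up to degree $t$. This rests on the classical fact that $\mathrm{Hom}_k(\mathbb{R}^d)=\bigoplus_{j\ge 0}\langle x,x\rangle^{j}\,\mathrm{Harm}_{k-2j}(\mathbb{R}^d)$, which on the sphere collapses the factor $\langle x,x\rangle^j$ to $1$; everything else is a direct translation between matrix products and sums over $X$. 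Since this is the statement of \cite[Theorem 5.3]{DGS77}, I would cite that decomposition rather than reprove it, and the remaining verification is the routine bookkeeping sketched above.
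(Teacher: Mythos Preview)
Your argument is correct and is essentially the standard proof from \cite[Theorem~5.3]{DGS77}: use the design identity on the degree-$(k+l)$ polynomials $\phi_{k,i}^{(d)}\phi_{l,j}^{(d)}$ for the forward direction, and the harmonic decomposition $\mathrm{Hom}_k(\mathbb{R}^d)=\bigoplus_{j\ge 0}\langle x,x\rangle^{j}\,\mathrm{Harm}_{k-2j}(\mathbb{R}^d)$ for the converse. Note that the present paper does not supply its own proof of this lemma---it simply cites \cite{DGS77}---so there is nothing further to compare against.
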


The following lemma shows that the {\it derived codes} of a spherical design is also a spherical design.

\begin{lemma}{\rm \cite[Theorem 8.2]{DGS77}}\label{thm:derived-DGS}
Let $X$ be a spherical $t$-design in $\mathbb{S}^{d-1}$ containing the vector $\alpha=(1,0,\ldots,0)$. Assume that $1\leq s^*\leq t+1$ where $s^*:=|A(X)\backslash\{-1\}|$. For any $\beta\in A(X)\backslash\{-1\}$, define the derived code $\mathcal{D}_{\beta}$ with respect to $\alpha$ and $\beta$ as
\begin{equation*}
\mathcal{D}_{\beta}:=\Big\{\xi\in \mathbb{S}^{d-2} \mid (\beta,\sqrt{1-\beta^2}\cdot \xi)\in X\Big\}.
\end{equation*}
Then $\mathcal{D}_{\beta}$ is a spherical $(t+1-s^*)$-design in $\mathbb{S}^{d-2}$ for any $\beta\in A(X)\backslash\{-1\}$.
\end{lemma}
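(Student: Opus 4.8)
The plan is to deduce the design property of $\mathcal{D}_\beta$ from that of $X$ by lifting harmonic test functions on $\mathbb{S}^{d-2}$ up to polynomials on $\mathbb{S}^{d-1}$ and then invoking the $t$-design identity for $X$. By the harmonic characterization (part (i) of the preceding Lemma), it is enough to prove that $\sum_{\xi\in\mathcal{D}_\beta}g(\xi)=0$ for every homogeneous harmonic $g\in\mathrm{Harm}_m(\mathbb{R}^{d-1})$ with $1\le m\le t+1-s^*$; the case $m=0$ is only a nonemptiness/counting statement. Writing each $x\in X$ as $x=(u,\sqrt{1-u^2}\,\xi)$ with $u=\langle\alpha,x\rangle$ and $\xi\in\mathbb{S}^{d-2}$, the points of $X$ lying in the shell $u=\beta$ are precisely $(\beta,\sqrt{1-\beta^2}\,\xi)$ with $\xi\in\mathcal{D}_\beta$.

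Fixing such a $g$, I would introduce the annihilator polynomial $Q(u):=\prod_{\gamma\in A(X)\setminus\{-1,\beta\}}(u-\gamma)$ of degree $s^*-1$, which vanishes at every inner-product value of $X$ other than $-1$ and $\beta$, and set $F(x):=Q(\langle\alpha,x\rangle)\,g(x')$, where $x'$ is the vector of the last $d-1$ coordinates of $x$. Homogeneity of $g$ gives $g(x')=(1-u^2)^{m/2}g(\xi)$, so $F$ is an honest polynomial of degree at most $m+(s^*-1)\le t$, and the $t$-design property of $X$ yields $\frac1{|X|}\sum_{x\in X}F(x)=\int_{\mathbb{S}^{d-1}}F\,\mathrm{d}\mu_d$.

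I would then evaluate both sides. On the discrete side, the factor $(1-u^2)^{m/2}$ annihilates the contributions of $x=\pm\alpha$ (since $m\ge1$), while $Q$ annihilates every shell $u\in A(X)\setminus\{-1,\beta\}$; hence only the shell $u=\beta$ remains and $\sum_{x\in X}F(x)=Q(\beta)(1-\beta^2)^{m/2}\sum_{\xi\in\mathcal{D}_\beta}g(\xi)$. On the integral side, the standard slicing $\mathrm{d}\mu_d\propto(1-u^2)^{(d-3)/2}\,\mathrm{d}u\,\mathrm{d}\mu_{d-1}(\xi)$ along the $\alpha$-axis factors $\int_{\mathbb{S}^{d-1}}F\,\mathrm{d}\mu_d$ into a one-dimensional $u$-integral times $\int_{\mathbb{S}^{d-2}}g\,\mathrm{d}\mu_{d-1}$, and the latter is $0$ because $g$ is a nonconstant harmonic. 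Equating the two sides and using $\beta\in(-1,1)$ together with $Q(\beta)\ne0$ forces $\sum_{\xi\in\mathcal{D}_\beta}g(\xi)=0$, which is exactly the desired design condition for $\mathcal{D}_\beta$.

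I expect the only delicate point to be the degree bookkeeping: isolating a single shell costs the annihilator $Q$ exactly $s^*-1$ degrees, which is precisely what caps the controllable harmonic degree at $t+1-s^*$ and thus pins down the design strength. A secondary thing to check is that $F$ is genuinely a polynomial of the claimed degree (so that the design identity applies) and that the pole and antipode terms drop out automatically for $m\ge1$; both are immediate from the homogeneity of $g$ and the factor $(1-u^2)^{m/2}$.
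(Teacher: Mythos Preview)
Your proof is correct. The paper does not itself prove this cited lemma, but it immediately proves a generalization (Theorem~\ref{thm:derived}), and the argument there takes a genuinely different route from yours. You isolate the single shell $u=\beta$ by multiplying the lifted harmonic $g(x')$ with one annihilator polynomial $Q(u)=\prod_{\gamma\ne\beta}(u-\gamma)$ of degree $s^*-1$, apply the integral design identity to the resulting degree-$\le t$ polynomial $F$, and then use the slicing formula together with $\int_{\mathbb{S}^{d-2}}g\,\mathrm{d}\mu_{d-1}=0$ to force the shell sum to vanish. The paper instead tests against the whole family $G_{r,k}(\zeta)=\varepsilon^{k-r}(1-\varepsilon^2)^{r/2}F_r(\xi)$ for $k=r,\ldots,r+s-1$, invokes invariance of the design sum under the stabilizer $O(d-1)\subset O(d)$ of $\alpha$ rather than the integral identity, and then inverts the resulting $s\times s$ Vandermonde system to conclude that each shell sum $\sum_{\xi\in\mathcal{D}_\varepsilon}F_r(\xi)$ is $O(d-1)$-invariant. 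Your Lagrange-interpolation viewpoint is more direct for a single fixed $\beta$; the paper's moment/Vandermonde viewpoint handles all shells at once and phrases the conclusion via group invariance rather than harmonic vanishing. The degree bookkeeping---$s^*-1$ degrees spent to separate shells, leaving $t+1-s^*$ for the test function---is identical in both arguments.
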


In the following we generalize Lemma \ref{thm:derived-DGS} to the case when $\pm\alpha$ is not contained in $X$.  

\begin{theorem}\label{thm:derived}
Let $X$ be a finite non-empty set in $\mathbb{S}^{d-1}$. 
Let $\alpha=(1,0,\ldots,0)$ and assume that $\pm\alpha\not\in X$. 
Let $B=\{\langle \alpha ,x\rangle \mid x\in X\}=\{\beta_1,\ldots,\beta_s\}$. 
For $i\in \{1,\ldots,s\}$, define the derived code
\begin{equation*}
\mathcal{D}_{\beta_i}=\Big\{\xi\in \mathbb{S}^{d-2} \mid (\beta_i,\sqrt{1-\beta_i^2}\cdot \xi)\in X\Big\}.
\end{equation*}
Then we have the following results.
\begin{enumerate}
\item[\rm (i)] For any $i,j\in\{1,\ldots,s\}$, $A(\mathcal{D}_{\beta_i},\mathcal{D}_{\beta_j})\subset \Big\{\frac{\gamma-\beta_i \beta_j}{\sqrt{(1-\beta_i^2)(1-\beta_j^2)}} \mid \gamma\in A(X)\Big\}$.
\item[\rm (ii)] Assume that $X$ is a spherical $t$-design in $\mathbb{S}^{d-1}$. 
Then $\mathcal{D}_{\beta_i}$ is a $(t+1-s)$-design in $\mathbb{S}^{d-2}$ for any $i\in \{1,\ldots,s\}$. 
\end{enumerate}
\end{theorem}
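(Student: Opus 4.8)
The plan is to reduce both parts to direct computations using the characteristic-matrix characterization of designs from Lemma (DGS, Theorem 5.3). For part (i), I would simply write down the inner product of two points in the derived codes. If $\xi\in\mathcal{D}_{\beta_i}$ and $\eta\in\mathcal{D}_{\beta_j}$, then by definition the full vectors $x=(\beta_i,\sqrt{1-\beta_i^2}\,\xi)$ and $y=(\beta_j,\sqrt{1-\beta_j^2}\,\eta)$ lie in $X$. Computing $\langle x,y\rangle=\beta_i\beta_j+\sqrt{(1-\beta_i^2)(1-\beta_j^2)}\,\langle\xi,\eta\rangle$ and solving for $\langle\xi,\eta\rangle$ gives exactly the claimed expression, with $\gamma=\langle x,y\rangle\in A(X)$. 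The only point requiring a word of care is the case $\xi=\eta$ with $i=j$, where one must check the corresponding full vectors are distinct (they are, provided the derived code has no repeated points), so that $\gamma$ genuinely ranges over $A(X)$ rather than including the value $1$; this is a minor bookkeeping issue.

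For part (ii), the idea is to integrate the design condition for $X$ against test functions that only depend on the last $d-1$ coordinates, thereby extracting a design condition on a fixed slice $\mathcal{D}_{\beta_i}$. Concretely, let $g$ be any polynomial of degree at most $t+1-s$ on $\mathbb{S}^{d-2}$, and define a polynomial $f$ on $\mathbb{S}^{d-1}$ by multiplying (the homogeneous extension of) $g$, evaluated at the normalized last $d-1$ coordinates, by the factor
\begin{equation*}
\prod_{j\neq i}\frac{\langle\alpha,x\rangle-\beta_j}{\beta_i-\beta_j},
\end{equation*}
which is a polynomial of degree $s-1$ in $x_1=\langle\alpha,x\rangle$ that equals $1$ on the slice $x_1=\beta_i$ and vanishes on every other slice $x_1=\beta_j$. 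The product $f$ then has degree at most $t$, so the spherical $t$-design property of $X$ applies to $f$. The left-hand integral $\int_{\mathbb{S}^{d-1}}f\,d\mu_d$ must be shown to reproduce (up to a positive constant independent of $g$) the integral $\int_{\mathbb{S}^{d-2}}g\,d\mu_{d-1}$, while the right-hand average over $X$ collapses to an average of $g$ over $\mathcal{D}_{\beta_i}$ because the interpolating factor annihilates all points outside the chosen slice.

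The main obstacle is the left-hand integral: one must verify that integrating the product of the slicing polynomial and the lifted $g$ over $\mathbb{S}^{d-1}$ indeed yields a constant multiple of the spherical average of $g$ over the equatorial sphere $\mathbb{S}^{d-2}$. This is where the degree budget $t+1-s$ is pinned down — the factor of degree $s-1$ in $x_1$ consumes $s-1$ of the available $t$ degrees, and the interaction between the $x_1$-integration (via the standard slicing/Funk–Hecke type disintegration of $\mu_d$ over the latitude variable) and the lower-dimensional measure must be carried out so that the $x_1$-dependent weights integrate to a nonzero constant. I would handle this by disintegrating $\mu_d$ as an integral over $x_1\in[-1,1]$ with weight proportional to $(1-x_1^2)^{(d-3)/2}$ against the uniform measure on the slice $\mathbb{S}^{d-2}$, and then check that the resulting one-dimensional integral of the slicing polynomial against this weight is nonzero; positivity of the weight guarantees this, so the slice average of $g$ over $\mathcal{D}_{\beta_i}$ equals its spherical average, which is precisely the assertion that $\mathcal{D}_{\beta_i}$ is a $(t+1-s)$-design.
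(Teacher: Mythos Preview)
Your argument for (i) is correct and matches the paper's.

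For (ii), the Lagrange-interpolation idea is the right one, and is essentially dual to the paper's approach: rather than isolating a single slice with $L_i(x_1)=\prod_{j\neq i}\frac{x_1-\beta_j}{\beta_i-\beta_j}$, the paper applies the design identity to the test functions $x_1^{k-r}F_r(x_2,\ldots,x_d)$ for $k=r,\ldots,r+s-1$, obtains a Vandermonde system in the slice sums $\sum_{\xi\in\mathcal{D}_\varepsilon}F_r(\xi)$, and inverts it (your $L_i$ is exactly a row of the inverse Vandermonde). The paper then phrases the conclusion via $O(d-1)$-invariance of each slice sum rather than via the integral.

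However, your final paragraph contains a real gap. First, if you literally evaluate $g$ at the \emph{normalized} last $d-1$ coordinates, the resulting $f$ carries a factor $(1-x_1^2)^{-r/2}$ and is not a polynomial, so the design identity does not apply; you must take the lift $\tilde g(x)=g(x_2,\ldots,x_d)$ instead. Second, your claim that ``positivity of the weight guarantees'' $\int_{-1}^1 L_i(x_1)(1-x_1^2)^{(d-3)/2}\,dx_1\neq 0$ is false: $L_i$ changes sign $s-1$ times on $(-1,1)$, and this integral can vanish (e.g.\ $s=2$, $d=3$, $\beta_j=0$). Even if it were nonzero, the constant relating $\int f$ to $\int g$ picks up an extra factor $(1-x_1^2)^{r/2}$ depending on $\deg g$, so it cannot match the ratio $|\mathcal{D}_{\beta_i}|/|X|$ on the discrete side. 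The clean fix is to abandon the constant-matching altogether and test only on $g\in\text{Harm}_r(\mathbb{R}^{d-1})$ for $1\leq r\leq t+1-s$: then $\int_{\mathbb{S}^{d-2}}g\,d\mu_{d-1}=0$, the disintegration gives $\int_{\mathbb{S}^{d-1}}f\,d\mu_d=0$ outright, and the $t$-design identity yields $(1-\beta_i^2)^{r/2}\sum_{\xi\in\mathcal{D}_{\beta_i}}g(\xi)=0$, hence $\sum_{\mathcal{D}_{\beta_i}}g=0$ since $\beta_i\neq\pm1$. No nonvanishing constant is needed.
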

\begin{proof}

(i) For $x\in \mathcal{D}_{\beta_i}$ and $y\in \mathcal{D}_{\beta_j}$ with $\langle x,y\rangle=\gamma$, write $$
x=(\beta_i,\sqrt{1-\beta_i^2}x'),\quad y=(\beta_j,\sqrt{1-\beta_i^2}y').$$
The condition $\langle x,y\rangle=\gamma$ implies that 
$$
\beta_i \beta_j+\sqrt{(1-\beta_i^2)(1-\beta_j^2)}\langle x',y'\rangle=\gamma,
$$
and thus $$
\langle x',y'\rangle=\frac{\gamma-\beta_i \beta_j}{\sqrt{(1-\beta_i^2)(1-\beta_j^2)}}. 
$$
Therefore $A(\mathcal{D}_{\beta_i},\mathcal{D}_{\beta_j})\subset \Big\{\frac{\gamma-\beta_i \beta_j}{\sqrt{(1-\beta_i^2)(1-\beta_j^2)}} \mid \gamma\in A(X)\Big\}$ holds.

(ii) For any $r$ with $0\leq r \leq t+1-s$, any $F_r\in\text{Hom}_{r}(\mathbb{R}^{d-1})$, and any $k$ with $r\leq k\leq t$, define $G_{r,k}\in\text{Hom}_k(\mathbb{R}^d)$ as 
\begin{equation*}
G_{r,k}(\zeta)=\varepsilon^{k-r}(1-\varepsilon^2)^{r/2}F_r(\xi),
\end{equation*}
where $\zeta=(\varepsilon,\sqrt{1-\varepsilon^2}\xi)$. 
Then 
\begin{equation*}
\sum_{\zeta\in X}G_{r,k}(\zeta)=\sum_{\varepsilon\in B}\varepsilon^{k-r}(1-\varepsilon^2)^{r/2}\sum_{\xi \in \mathcal{D}_{\varepsilon}}F_r(\xi).
\end{equation*} 
For any $T\in O(d-1)$, $T':=\begin{pmatrix} 1 & 0 \\ 0 & T\end{pmatrix}\in O(d)$ fixes $\alpha$. Applying $T'$ to $X$ we obtain 
\begin{equation}\label{T-to-X}
\sum_{\zeta\in X}G_{r,k}(\zeta)=\sum_{\varepsilon\in B}\varepsilon^{k-r}(1-\varepsilon^2)^{r/2}\sum_{\xi \in T\mathcal{D}_{\varepsilon}}F_r(\xi). 
\end{equation}
Here we use the fact that the left-hand side in \eqref{T-to-X} is invariant under the action of $T'$ because $X$ is a spherical design \cite{DGS77}. By taking $k=r,r+1,\ldots,r+s-1$, equation \eqref{T-to-X} yields $s$ linear equations whose $s$ unknowns are 
\begin{equation*}
\sum_{\xi \in T\mathcal{D}_{\varepsilon}}F_r(\xi),\quad \varepsilon \in B.
\end{equation*} 
Its $s\times s$ coefficient matrix is 
\begin{equation*}
[\varepsilon^{k-r}(1-\varepsilon^2)^{r/2}]_{\substack{\varepsilon\in B\\ r\leq k\leq r+s-1}}=[\beta_i^{k}]_{\substack{1\leq i\leq s,\\ 0\leq k\leq s-1}}\cdot \text{diag}[(1-\beta_1^2)^{r/2},\ldots,(1-\beta_s^2)^{r/2}],
\end{equation*}
where $\text{diag}[(1-\beta_1^2)^{r/2},\ldots,(1-\beta_s^2)^{r/2}]$ is an $s\times s$ diagonal matrix whose $(i,i)$-entry is $(1-\beta_i^2)^{r/2}$.
Since $\varepsilon\neq\pm 1$, the coefficient matrix is non-singular.
Therefore, for each $0\leq r \leq t+1-s$, $\sum_{\xi \in T\mathcal{D}_{\varepsilon}}F_r(\xi)$ does not depend on $T\in O(d-1)$. By \cite[Definition 5.1]{DGS77}, this implies that $\mathcal{D}_{\varepsilon}$ is a $(t+1-s)$-design in $\mathbb{S}^{d-2}$. 
\end{proof}

A collection of finite sets $X_1,\ldots,X_n$ is said to be distance-invariant if for any $i,j$ and any $\alpha\in A(X_i,X_j)$, the size of the set 
\[
\{y\in X_j \mid \langle x,y\rangle=\alpha\}
\]
for $x\in X_i$ depends only on $\alpha$, does not depend on the particular choice of $x$.  
The following theorem provides a sufficient condition for a collection of  sets $X_1,\ldots,X_n$ being distance-invariant, which is a generalization of \cite[Theorem 7.4]{DGS77}.
\begin{theorem}\label{thm:di}
    Let $X_i$ be a spherical $t_i$-design in $S^{d-1}$ for $i\in\{1,\ldots,n\}$.  
    Let $s_{i,j}=|A(X_i,X_j)|$. 
    Assume that one of the following holds for $i,j\in\{1,\ldots,n\}$:
    \begin{enumerate}
        \item[{\rm (i)}] $s_{i,j}-1\leq t_j$,
        \item[{\rm (ii)}] $s_{i,j}-2= t_j$ and $X_i=-X_j$. Write $A'(X_i,X_j)=A(X_i,X_j)\setminus\{-1\}$. 
    \end{enumerate}
    Then the sets $\cup_{i=1}^n X_i$ is distance invariant.
\end{theorem}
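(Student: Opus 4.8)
The plan is to fix an ordered pair $(i,j)$ and an arbitrary $x\in X_i$, and to prove that the distance distribution vector $\big(n_\beta(x)\big)_{\beta\in A(X_i,X_j)}$, where $n_\beta(x):=|\{y\in X_j\mid \langle x,y\rangle=\beta\}|$, solves a linear system whose data are independent of the choice of $x\in X_i$. The engine I would use is the Gegenbauer moment identity coming from the design property of $X_j$: by the addition formula together with the spherical-design characterization \cite[Theorem 5.3]{DGS77}, for every $x\in\mathbb{S}^{d-1}$ and every $0\le k\le t_j$,
\begin{equation*}
\sum_{y\in X_j}G_k^{(d)}(\langle x,y\rangle)=\sum_{m=1}^{h_k}\phi_{k,m}^{(d)}(x)\Big(\sum_{y\in X_j}\phi_{k,m}^{(d)}(y)\Big)=|X_j|\,\delta_{k,0}.
\end{equation*}
Writing $s:=s_{i,j}$, $B:=A(X_i,X_j)$ and isolating the unique possible diagonal term (on the sphere $\langle x,y\rangle=1$ forces $y=x$), this identity rewrites, for $0\le k\le t_j$, as
\begin{equation*}
\sum_{\beta\in B}n_\beta(x)\,G_k^{(d)}(\beta)=|X_j|\,\delta_{k,0}-c_x\,h_k,
\end{equation*}
where $c_x=1$ if $x\in X_j$ and $c_x=0$ otherwise; since in the present setting the $X_i$ are pairwise disjoint, $c_x=\delta_{i,j}$ is constant over $x\in X_i$, so the whole right-hand side is independent of $x$.

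For case (i), where $|B|=s\le t_j+1$, I would invoke the identity for all $k\in\{0,1,\dots,s-1\}$, obtaining $s$ equations in the $s$ unknowns $\{n_\beta(x)\mid\beta\in B\}$ with coefficient matrix $\big[G_k^{(d)}(\beta)\big]_{0\le k\le s-1,\,\beta\in B}$. Since $\deg G_k^{(d)}=k$, this matrix equals a unipotent triangular change-of-basis matrix times the Vandermonde matrix $[\beta^k]_{\beta\in B}$, and the latter is nonsingular because the elements of $B$ are distinct. Hence the system has a unique solution, which by the previous paragraph is independent of $x\in X_i$; this is exactly distance invariance for the pair $(i,j)$.

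Case (ii) is the one requiring the extra hypothesis, and I expect it to be the main obstacle. Here $s=t_j+2$, so the moment identity only yields the $s-1$ equations $k=0,\dots,t_j=s-2$, one short of determining the $s$ unknowns. The antipodal assumption $X_i=-X_j$ supplies the missing relation: for every $x\in X_i$ one has $-x\in -X_i=X_j$, and since $\langle x,y\rangle=-1$ forces $y=-x$, it follows that $n_{-1}(x)=1$ identically. I would substitute this known value, move its contribution $G_k^{(d)}(-1)$ to the right-hand side, and be left with $s-1$ equations in the $s-1$ unknowns $\{n_\beta(x)\mid\beta\in A'(X_i,X_j)\}$; the coefficient matrix $\big[G_k^{(d)}(\beta)\big]_{0\le k\le s-2,\,\beta\in A'(X_i,X_j)}$ is again nonsingular by the same Vandermonde argument, and the right-hand side is still independent of $x$, which gives distance invariance.

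The delicate point throughout is the bookkeeping that separates the genuine distances in $B$ from the forced contributions at inner products $1$ and $-1$: recognizing that $n_{-1}(x)\equiv 1$ is precisely the one linear equation missing in case (ii), and that excising it yields a nonsingular square subsystem over $A'(X_i,X_j)$. The special case $n=1$, $X_1=X$ recovers \cite[Theorem 7.4]{DGS77}, and running the argument over all ordered pairs $(i,j)$ establishes that $\bigcup_{i=1}^n X_i$ is distance invariant.
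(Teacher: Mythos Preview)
Your proposal is correct and follows essentially the same approach as the paper: both use the spherical-design property of $X_j$ to derive, for each $x\in X_i$, a linear system in the unknowns $n_\beta(x)$ whose right-hand side is independent of $x$, and then conclude by Vandermonde invertibility, with case (ii) handled by peeling off the forced relation $n_{-1}(x)=1$. The only cosmetic difference is that the paper works in the monomial basis (expanding $\langle x,z\rangle^\lambda$ in Gegenbauer polynomials to obtain $\sum_{\alpha}\alpha^\lambda p_\alpha^j(x)=|X_j|f_{\lambda,0}-\delta_{X_i,X_j}$), whereas you work directly in the Gegenbauer basis and pass to Vandermonde via a triangular change of basis at the end.
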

\begin{proof}
    For $x\in X_i$ and $\alpha\in A(X_i,X_j)$, define $p_{\alpha}^j(x)=|\{z\in X_j \mid \langle x,z\rangle=\alpha \}|$. 
    We reprove equalities \eqref{sum}, \eqref{sperical5design-2}, \eqref{sperical5design-4} and prove them for exponent $\ell$ odd. 
    Let $H_\ell$ be a characteristic matrix of $X_j$. 
    Calculate $(\sum_{\ell=0}^\lambda f_{\lambda,\ell}\phi_{\ell}(x)H_{\ell}^\top)H_0$ in two ways. 

    On the one hand, 
    \begin{align}
        (\sum_{\ell=0}^\lambda f_{\lambda,\ell}\phi_{\ell}(x)H_{\ell}^\top)H_0&=\sum_{\ell=0}^\lambda f_{\lambda,\ell}\phi_{\ell}(x)H_{\ell}^\top H_0\nonumber\\
        &=f_{\lambda,0}\phi_{0}(x)H_{0}^\top H_0\nonumber\\
        &=|X_j|f_{\lambda,0},\label{eq:di1} 
    \end{align}
    and on the other hand, 
    \begin{align}
        (\sum_{\ell=0}^\lambda f_{\lambda,\ell}\phi_{\ell}(x)H_{\ell}^\top)H_0&=\sum_{z\in X_j}\sum_{\ell=0}^\lambda f_{\lambda,\ell}\phi_{\ell}(x)\phi_{\ell}(z)^\top \nonumber\\
        &=\sum_{z\in X_j}\sum_{\ell=0}^\lambda f_{\lambda,\ell}Q_{\ell}(\langle x,z\rangle)\nonumber\\
        &=\sum_{z\in X_j}\langle x,z\rangle^\lambda\nonumber\\
        &=\sum_{\alpha\in A(X_i,X_j)}\alpha^\lambda p_{\alpha}^j(x)+\delta_{X_i,X_j} \label{eq:di2}\\
        &=\sum_{\alpha\in A'(X_i,X_j)}\alpha^\lambda p_{\alpha}^j(x)+\delta_{X_i,X_j}+\delta_{X_i,-X_j}(-1)^\lambda. \label{eq:di3}
    \end{align}
    We consider the cases (i) and (ii) separately. 
    
    (i) We obtain from \eqref{eq:di1} and \eqref{eq:di2}:   
    \begin{align}\label{eq:di4}
        \sum_{\alpha\in A(X_i,X_j)}\alpha^\lambda p_{\alpha}^j(x)=|X_j|f_{\lambda,0}-\delta_{X_i,X_j}.
    \end{align}
    For $\lambda\leq s_{i,j}-1$, \eqref{eq:di4} yields  a system of $s_{i,j}$ linear equations whose unknowns are
    \[
    \{p_{\alpha}^j(x) \mid \alpha\in A(X_i,X_j)\}. 
    \]
    Its coefficients matrix is 
 the Vandermonde matrix $(\alpha^\lambda)$ where $\alpha\in A(X_i,X_j)$ and $\lambda\in\{0,\ldots,s_{i,j}-1\}$. 
 Therefore $p_{\alpha}^j(x)$ does not depend on the choice of $x\in X_i$, and is uniquely determined by $|X_j|$ and $A(X_i,X_j)$. 
 
    (ii) We obtain from \eqref{eq:di1} and \eqref{eq:di3}:   
    \begin{align}\label{eq:di5}
        \sum_{\alpha\in A'(X_i,X_j)}\alpha^\lambda p_{\alpha}^j(x)=|X_j|f_{\lambda,0}-\delta_{X_i,X_j}-(-1)^\lambda.
    \end{align}
    For $\lambda\leq s_{i,j}-2$, \eqref{eq:di5} yields a system of $s_{i,j}-1$ linear equations whose unknowns are
    \[
    \{p_{\alpha}^j(x) \mid \alpha\in A'(X_i,X_j)\}. 
    \]
    Its coefficients matrix is 
 the Vandermonde matrix $(\alpha^\lambda)$ where $\alpha\in A(X_i,X_j)$ and $\lambda\in\{0,\ldots,s_{i,j}-2\}$. 
 Therefore $p_{\alpha}^j$ does not depend on the choice of $x\in X_i$, and is uniquely determined by $|X_j|$ and $A'(X_i,X_j)$.
 Thus, a collection of $X_1,\ldots,X_n$ is distance invariant. 
\end{proof}

\subsection{Association schemes and coherent configurations}\label{sec:qcc}

In this subsection, we introduce the definitions of association schemes, coherent configurations and $Q$-polynomial coherent configuration.

Association schemes are combinatorial axiomatization of transitive finite permutation groups and coherent configurations are that of finite permutation groups.   
A particular class of association schemes, {\it $Q$-polynomial association schemes}, were introduced by Delsarte \cite{D73} to deal with design theory and distance set in a unified way. 
Tight spherical designs can be characterized by the $Q$-polynomial association schemes with certain parameters \cite{DGS77}.
Typical examples of $Q$-polynomial association schemes are obtained from the minimum vectors of tight spherical designs such as the $E_8$ root lattice or the Leech lattice, and tight designs such as Witt designs, and tight orthogonal arrays such as the Golay codes.  
The notion was extended to coherent configurations \cite{Suda22}.

Let $X$ be a non-empty finite set.  We define $\mathrm{diag}(X \times X) = \{(x, x)\ |\ x \in  X\}$. 
For a subset $R$ of $X\times X$, we define $R^\top:=\{(y,x)\ |\ (x,y)\in R\}$, and define the projection of $R$ as follows:
\begin{align*}
\mathrm{pr}_1(R)&=\{x\in X\mid (x,y)\in R \text{ for some }y\in X\},\\
\mathrm{pr}_2(R)&=\{y\in X\mid (x,y)\in R \text{ for some }x\in X\}.
\end{align*}

\begin{definition}{\rm (Coherent configuration)}\label{cc}
Let $X$ be a non-empty finite set and $\mathcal{R}=\{R_i \mid i\in I\}$ be a set of non-empty subsets of $X\times X$.
The pair $\mathcal{C}=(X,\mathcal{R})$ is a coherent configuration if the following properties are satisfied:
\begin{enumerate}
\item[\rm (i)] $\{R_i\}_{i\in I}$ is a partition of $X\times X$,
\item[\rm (ii)] for any $i \in I$, $R_i^\top\in \mathcal{R}$, 
\item[\rm (iii)] $R_i \cap \mathrm{diag}(X\times X)\neq \emptyset$ implies $R_i \subset\mathrm{diag}(X\times X)$,
\item[\rm (iv)] for any $i,j,h\in I$, the number $|\{z\in X \mid (x,z)\in R_i,(z,y)\in R_j\}|$ is independent of the choice of $(x,y)\in R_h$.   
\end{enumerate}
\end{definition}

\begin{remark}
If there is an index $i\in I$ such that $R_i=\mathrm{diag}(X \times X)$, coherent configurations are said to be association schemes and the value $|I|-1$ is said to be the class of the association schemes.  
An association scheme is symmetric if $R_i^\top=R_i$ for any $i\in I$. 
A {\it strongly regular graph} with parameters $(n, k, \lambda, \mu)$ (for short, $\mathrm{srg}(n, k, \lambda, \mu)$) is a graph on $n$ vertices which is regular with valency $k$ and 
has the following two properties:
\begin{itemize}
\item[\rm (i)] any two adjacent vertices have exactly $\lambda$ common neighbours;
\item[\rm (ii)]  any two nonadjacent vertices have exactly $\mu$ common neighbours.
\end{itemize}
For a graph $G=(V,E)$,  $G$ is strongly regular if and only if the pair $(V,\{\mathrm{diag}(X \times X),E, (X\times X)\setminus (\mathrm{diag}(X \times X)\cup E)\})$ is a $2$-classes symmetric association scheme.   
	
\end{remark}

\begin{remark}
Let $A_i$ be the adjacency matrix of the graph $(X,R_i)$. 
We define the coherent algebra $\mathcal{A}$ of the coherent configuration $\mathcal{C}$ as the subalgebra of $\mathrm{Mat}_{|X|}(\mathbb{C})$ generated by $\{A_i \mid i\in I\}$ over $\mathbb{C}$. 
There exists a subset $\Omega$ in $I$ such that $\mathrm{diag}(X\times X)=\bigcup_{i\in \Omega}R_i$ by Definition \ref{cc} (i) and (iii), which is uniquely determined.
We obtain the standard partition $\{X_i\}_{i\in \Omega}$ of $X$ where $X_i=\mathrm{pr}_1(R_i)=\mathrm{pr}_2(R_i)$ for $i\in\Omega$. 
For $i,j\in \Omega$, define  
$I^{(i,j)}=\{R_\ell \mid \ell\in I, R_\ell \subset X_i\times X_j\}$. 
By \cite{Hig75} we know that $\{I^{(i,j)}\mid i,j\in\Omega\}$ is a partition of $I$.
We put $r_{i,j}=|I^{(i,j)}|-\delta_{i,j}$, and we call the matrix $(|I^{(i,j)}|)_{i,j\in\Omega}$ the type of the coherent configuration $\mathcal{C}$. 
By the partition $\{I^{(i,j)}\mid i,j\in\Omega\}$ of $I$, the elements of $I^{(i,j)}$ are renumbered as $R_{\varepsilon_{i,j}}^{(i,j)},\ldots,R_{r_{i,j}}^{(i,j)}$ such that $R_0^{(i,i)}=\mathrm{diag}(X_i\times X_i)$ and $(R_h^{(i,j)})^\top =R_h^{(j,i)}$. 
We denote the adjacency matrix of $R_h^{(i,j)}$ as $A_h^{(i,j)}$. 
For $i,j\in \Omega$, define by $\mathcal{A}^{(i,j)}$ the vector space spanned by  $A_\ell^{(i,j)}$ ($\varepsilon_{i,j}\leq \ell \leq r_{i,j}$) over $\mathbb{C}$. 
Then $\mathcal{A}^{(i,j)}\mathcal{A}^{(j,h)}\subset\mathcal{A}^{(i,h)}$ holds.
We define intersection numbers $p_{\ell,m,n}^{(i,j,h)}$ as 
$$
A_\ell^{(i,j)}A_m^{(j,h)}=\sum\limits_{n=\varepsilon_{i,j}}^{r_{i,j}}p_{\ell,m,n}^{(i,j,h)}A_n^{(i,h)}.
$$
Set $k_\ell^{(i,j)}=p_{\ell,\ell,0}^{(i,j,i)}$.
Then $k_\ell^{(i,j)}=|\{y\in X_j\mid (x,y)\in R_\ell^{(i,j)}\}|$ for any $x\in X_i$.
We call $k_\ell^{(i,j)}$ the valency of $R_\ell^{(i,j)}$.
\end{remark}

We next introduce the definition of the $Q$-polynomial coherent configuration. Let $\widetilde{r}_{i,j}=r_{i,j}-\varepsilon_{i,j}$ for any $i,j\in\Omega$.

\begin{definition}{\rm ($Q$-polynomial coherent configurations)}
Let $\mathcal{C}$ be a coherent configuration such that each fiber $\mathcal{C}^i=(X_i,I^{(i,i)})$ is a symmetric association scheme and there exists a basis $\{  {E}^{(i,j)}_{\ell} \ |\  i, j \in  \Omega, 0\leq\ell  \leq \widetilde{r}_{i,j}\}$ of $\mathcal{A}$ satisfying the following conditions: 
\begin{enumerate}
\item[(B1)] for any $i,j\in\Omega$, $  {E}_0^{(i,j)}=\frac{1}{\sqrt{|X_i||X_j|}}  {J}_{X_i,X_j}$, 
\item[(B2)] for any $i,j\in\Omega$, $\{E_\ell^{(i,j)}\mid 0\leq \ell \leq \widetilde{r}_{i,j}\}$ is a basis of $\mathcal{A}^{(i,j)}$ as a vector space, 
\item[(B3)] for any $i,j\in\Omega,\ell\in \{0,1,\ldots,\widetilde{r}_{i,j}\}$, $(E_\ell^{(i,j)})^\top=E_\ell^{(j,i)}$,
\item[(B4)] for any $i,j,i',j'\in\Omega$ and $\ell\in \{0,1,\ldots,\widetilde{r}_{i,j}\}$, $\ell'\in \{0,1,\ldots,\widetilde{r}_{i',j'}\}$, $E_\ell^{(i,j)}E_{\ell'}^{(i',j')}=\delta_{\ell,\ell'}\delta_{j,i'}E_\ell^{(i,j')}$.
\end{enumerate}
The coherent configuration $\mathcal{C}$ is said to be $Q$-polynomial if for any $i,j\in\Omega$, there exists a set of polynomials $\{v_h^{(i,j)}(x)\mid  0\leq h\leq \widetilde{r}_{i,j}\}$ satisfying that for any $h\in\{0,1,\ldots,\widetilde{r}_{i,j}\}$, $\mathrm{deg}v_h^{(i,j)}(x)=h$ and $\sqrt{|X_i||X_j|}\cdot   {E}_h^{(i,j)}=v_h^{(i,j)}(\sqrt{|X_i||X_j|}\cdot   {E}_1^{(i,j)})$ under the entry-wise product. 

\end{definition}

The author in \cite{Suda10} showed that coherent configurations can be obtained from a union of spherical designs.

\begin{lemma}{\rm \cite[Theorem 2.6]{Suda10}}\label{Suda-thm26}
Let $X_i\subset\mathbb{S}^{d-1}$ be a spherical $t_i$-design for $i\in \{1,\ldots,n\}$. Assume that for any $i, j \in  \{1,\ldots,n\}$ we have $X_i\cap X_j=\emptyset$ or $X_i=X_j$, and $X_i\cap (-X_j) =\emptyset$ or $X_i=-X_j$. 
For $i, j \in  \{1,\ldots,n\}$ we define $\widetilde{s}_{i,j}=|A(X_i, X_j)\backslash \{\pm 1\}|$, $s_{i,j}=|A(X_i, X_j)|$, $\alpha_{i,j}^0=1$, and we write $A(X_i, X_j)=\{\alpha_{i,j}^{1},\ldots,\alpha_{i,j}^{{s}_{i,j}}\}$. When $-1\in A(X_i, X_j)$, we let $\alpha^{s_{i,j}}_{i,j}=-1$. For $i, j \in  \{1,\ldots,n\}$ we define $R^k_{i,j}=\{({x},   {y})\in  X_i\times  X_j | \langle   {x},   {y}\rangle  = \alpha_{i,j}^k\}$ for each integer $1-\delta_{i,j}\leq k\leq s_{i,j}$. If one of the following holds depending on the choice of $i,j,k \in  \{1,\ldots,n\}$: 

\begin{enumerate}
\item[\rm (i)] $\widetilde{s}_{i,j}+\widetilde{s}_{j,k}-2 \leq t_j$;

\item[\rm (ii)] $\widetilde{s}_{i,j}+\widetilde{s}_{j,k}-3 = t_j$, and for any $\gamma \in  A(X_i, X_k)\backslash \{\pm 1\}$ there exist $\alpha \in  A(X_i, X_j)\backslash \{\pm 1\}$, $\beta \in  A(X_j, X_k)\backslash \{\pm 1\}$ such that the intersection number $p^j_{\alpha,\beta}(  {x},   {y})$ is independent of the choice of $  {x}\in X_i,   {y} \in  X_k$ with $\gamma=\langle   {x},  {y}\rangle$;

\item[\rm (iii)] $\widetilde{s}_{i,j}+\widetilde{s}_{j,k}-4 = t_j$, and for any $\gamma \in  A(X_i, X_k)\backslash \{\pm 1\}$ there exist $\alpha,\alpha' \in  A(X_i, X_j)\backslash \{\pm 1\}$, $\beta,\beta' \in  A(X_j, X_k)\backslash \{\pm 1\}$ such that $\alpha\neq \alpha'$, $\beta\neq \beta'$ and the intersection numbers $p^j_{\alpha,\beta}(  {x},   {y})$, $p^j_{\alpha',\beta}(  {x},   {y})$ and $p^j_{\alpha,\beta'}(  {x},   {y})$ are independent of the choice of $  {x}\in X_i,   {y} \in  X_k$ with $\gamma=\langle   {x},  {y}\rangle$;
\end{enumerate}
then $(\bigcup_{i=1}^{n}X_i,\{R^k_{i,j} | 1\leq i, j \leq n, 1-\delta_{i,j}\leq k\leq s_{i,j}\})$ is a coherent configuration.

\end{lemma}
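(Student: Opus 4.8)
The plan is to verify the four axioms of Definition~\ref{cc} for the pair $\mathcal{C}=(\bigcup_{i=1}^n X_i,\{R^k_{i,j}\})$. Axioms (i)--(iii) are immediate from the construction: the relations $R^k_{i,j}$ partition $X\times X$ because the $\alpha_{i,j}^k$ exhaust $A(X_i,X_j)\cup\{1\}$; since $\langle x,y\rangle=\langle y,x\rangle$ we get $(R^k_{i,j})^\top=R^k_{j,i}\in\mathcal{R}$; and a relation meets the diagonal exactly when $i=j$ and $k=0$, where $\alpha_{i,i}^0=1$ forces $R^0_{i,i}=\mathrm{diag}(X_i\times X_i)$. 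Hence the whole content is axiom~(iv), which I reduce to the statement: for fixed $i,j,k$ and fixed $x\in X_i$, $y\in X_k$, the intersection number $p^j_{\alpha,\beta}(x,y)=|\{z\in X_j\mid \langle x,z\rangle=\alpha,\ \langle z,y\rangle=\beta\}|$ depends only on $\gamma:=\langle x,y\rangle$, i.e.\ only on the relation $R_h$ containing $(x,y)$.

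The engine of the argument is a moment computation, in the spirit of the proof of Theorem~\ref{thm:di}. I claim that for all nonnegative integers $a,b$ with $a+b\le t_j$, the quantity $M_{a,b}(x,y):=\sum_{z\in X_j}\langle x,z\rangle^a\langle z,y\rangle^b$ depends only on $\gamma$. Indeed, $z\mapsto \langle x,z\rangle^a\langle z,y\rangle^b$ is the restriction to $\mathbb{S}^{d-1}$ of a homogeneous polynomial of degree $a+b$, so since $X_j$ is a spherical $t_j$-design and $a+b\le t_j$ we have $M_{a,b}(x,y)=|X_j|\int_{\mathbb{S}^{d-1}}\langle x,z\rangle^a\langle z,y\rangle^b\,d\mu_d(z)$. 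Viewed as a function of the pair $(x,y)$, this integral is invariant under replacing $(x,y)$ by $(gx,gy)$ for $g\in O(d)$ by the rotation invariance of $\mu_d$; since the $O(d)$-orbit of a pair of unit vectors is determined by their inner product, $M_{a,b}(x,y)$ is a function of $\gamma$ alone. (Equivalently, expanding $\langle x,z\rangle^a$ and $\langle z,y\rangle^b$ into Gegenbauer polynomials $G_m^{(d)}$ and applying the addition formula together with the design identity $H_m^\top H_{m'}=|X_j|\delta_{m,m'}I$ for $m+m'\le t_j$ exhibits $M_{a,b}$ explicitly as a polynomial in $\gamma$.)

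Next I separate from $M_{a,b}$ the boundary contributions of $z\in\{\pm x,\pm y\}\cap X_j$; under the coincidence/antipodality hypotheses ($X_j=X_i$ or $X_i\cap X_j=\emptyset$, and $X_j=-X_i$ or $X_i\cap(-X_j)=\emptyset$, and likewise for $k$) these are at most four explicit terms, each a monomial in $\gamma$ and $\pm1$, and present uniformly for all $x,y$. Writing $A'=A(X_i,X_j)\setminus\{\pm1\}$ and $B'=A(X_j,X_k)\setminus\{\pm1\}$ of sizes $\widetilde s_{i,j}$ and $\widetilde s_{j,k}$, subtraction gives
\[
\sum_{\alpha\in A',\,\beta\in B'}\alpha^a\beta^b\,p^j_{\alpha,\beta}(x,y)=M_{a,b}(x,y)-(\text{boundary}),
\]
whose right-hand side depends only on $\gamma$ whenever $a+b\le t_j$. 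The unknowns are the $\widetilde s_{i,j}\widetilde s_{j,k}$ numbers $p^j_{\alpha,\beta}$, and the coefficient matrix indexed by exponent pairs $(a,b)$ is the Kronecker product of a Vandermonde matrix in the (distinct) $\alpha$'s and one in the (distinct) $\beta$'s, hence nonsingular as soon as the full box $0\le a\le \widetilde s_{i,j}-1$, $0\le b\le \widetilde s_{j,k}-1$ is available. In case~(i) every such $(a,b)$ satisfies $a+b\le \widetilde s_{i,j}+\widetilde s_{j,k}-2\le t_j$, so all the required moments are $\gamma$-determined; inverting the system shows each $p^j_{\alpha,\beta}(x,y)$ is a function of $\gamma$ alone, and axiom~(iv) follows.

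Cases~(ii) and (iii) are where the \emph{main obstacle} lies. There $t_j$ falls short of $\widetilde s_{i,j}+\widetilde s_{j,k}-2$ by one or two, so the rows of top total degree in the Kronecker--Vandermonde system are unavailable and the system is underdetermined by one (resp.\ three) degrees of freedom. The plan is to let the extra hypotheses remove exactly that deficiency: in (ii) the assumed $\gamma$-independence of a single corner intersection number $p^j_{\alpha,\beta}$ transfers one unknown to the right-hand side, and in (iii) the three assumed numbers $p^j_{\alpha,\beta}$, $p^j_{\alpha',\beta}$, $p^j_{\alpha,\beta'}$ with $\alpha\neq\alpha'$, $\beta\neq\beta'$ remove the three missing degrees of freedom. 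The delicate point—which I expect to be the crux—is to show that after this reduction the remaining coefficient matrix, built only from exponent pairs with $a+b\le t_j$, still has full column rank; concretely, that deleting the top-total-degree monomials from the tensor-product Vandermonde while reinstating the columns indexed by the prescribed pairs leaves a nonsingular minor. I would establish this by a direct rank/determinant computation that exploits the product structure and the distinctness of the $\alpha$'s and $\beta$'s (the prescribed corner configuration $\{\alpha,\alpha'\}\times\{\beta,\beta'\}$ in (iii) being chosen precisely so that the deleted and reinstated monomials are complementary). Once nonsingularity is verified in all three cases, every intersection number depends only on the relation $R_h$ containing $(x,y)$, so axiom~(iv) holds and $\mathcal{C}$ is a coherent configuration.
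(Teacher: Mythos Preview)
The paper does not prove Lemma~\ref{Suda-thm26}; it is quoted from \cite{Suda10} as a known result, so there is no in-paper proof to compare against. Your approach is the natural one and is precisely the two-variable analogue of the Vandermonde/moment argument the paper uses for Theorem~\ref{thm:di}: replace the single moment $\sum_{z\in X_j}\langle x,z\rangle^\lambda$ by the double moment $M_{a,b}=\sum_{z\in X_j}\langle x,z\rangle^a\langle z,y\rangle^b$, use the $t_j$-design property to see that $M_{a,b}$ depends only on $\gamma=\langle x,y\rangle$ for $a+b\le t_j$, strip off the boundary terms, and invert a Kronecker product of Vandermonde matrices. This is exactly how the original proof in \cite{Suda10} proceeds.

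Regarding the step you flag as the crux: it does go through, and the conditions $\alpha\neq\alpha'$, $\beta\neq\beta'$ in~(iii) are exactly what make it work. Write $A=V\otimes W$ for the full Kronecker--Vandermonde matrix and use Jacobi's complementary-minor identity: deleting a row set $R$ and a column set $C$ from $A$ leaves a nonsingular matrix iff the $|R|\times|C|$ minor $A^{-1}[C,R]$ is nonsingular. Since $A^{-1}=V^{-1}\otimes W^{-1}$ and the entries of $V^{-1}$ (resp.\ $W^{-1}$) are the coefficients of the Lagrange interpolation polynomials at the nodes $\alpha$ (resp.\ $\beta$), one finds: in case~(ii) the relevant $1\times1$ minor is $(V^{-1})_{\alpha,m}(W^{-1})_{\beta,n}$, a product of leading Lagrange coefficients, hence nonzero; in case~(iii) the $3\times3$ minor factors (up to sign) as
\[
(V^{-1})_{\alpha,m}^{\,2}(W^{-1})_{\beta,n}^{\,2}\,(V^{-1})_{\alpha',m}(W^{-1})_{\beta',n}\,(\alpha'-\alpha)(\beta'-\beta),
\]
again nonzero precisely because $\alpha\neq\alpha'$ and $\beta\neq\beta'$. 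So the deferred determinant computation is routine once framed this way, and your outline constitutes a complete proof.
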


\section{Tight spherical $5$-designs of minimal type}

In this section we consider the existence of tight spherical $5$-designs of minimal type. It is well known that $D\subset \mathbb{S}^{d-1}$ forms a tight spherical $5$-design if and only if $D=X\cup -X$ where $X$ is a maximal ETF in $\mathbb{R}^d$.
Tight spherical $5$-designs may exist only if the dimension $d=2,3$ or $d=(2m+1)^2-2$ where $m$ is a positive integer. The existence of tight spherical $5$-designs is known only for $d=2,3,7$ and $23$ \cite{BB09b}. 

\subsection{Equivalent conditions for tight spherical $5$-designs of minimal type}
In this section we establish several equivalent conditions for tight spherical $5$-designs to be of minimal type.
We first introduce a useful notation.
\begin{definition}
Let $D\subset\mathbb{S}^{d-1}$ be a spherical $5$-design of minimal type. 
Let $\alpha\in \mathbb{R}^d$ be such that $\langle \alpha,x\rangle\in\{0,\pm 1\}$, $\forall x\in D$.
For any $\beta\in\{0,\pm 1\}$, we define the derived code $\mathcal{L}_{\alpha,\beta}(D)\subset \mathbb{S}^{d-1}$ with respect to $\alpha$ and $\beta$ by
\begin{equation*}
\mathcal{L}_{\alpha,\beta}(D):=\Bigg\{\frac{x-\frac{3 \beta}{d+2}\alpha}{\sqrt{1-\frac{3\beta^2}{d+2}}} \ \Big|\    x\in D, \langle \alpha,x\rangle=\beta \Bigg\}\subset \mathbb{S}^{d-1}.
\end{equation*}
\end{definition}

The following is the main theorem of this section, which provides several equivalent conditions on the existence of tight spherical $5$-design of minimal type.

\begin{theorem}\label{thm:cc}
Assume that $d>7$ is an integer. The existence of the following are equivalent.
\begin{enumerate}
\item[{\rm (i)}] A tight spherical $5$-design in $\mathbb{S}^{d-1}$ of minimal type. 
\item[{\rm (ii)}] Spherical $3$-designs $X_1,X_2,X_3$ in $\mathbb{S}^{d-2}$ with $X_1=-X_3,X_1\cap -X_1=\emptyset, X_2=-X_2$, $|X_1|=|X_3|=\frac{(d+1)(d+2)}{6}, |X_2|=\frac{2(d-1)(d+1)}{3}$, and 
\begin{align*}
A(X_1)&=A(X_3)=\left\{\frac{\sqrt{d+2}-3}{d-1},\frac{-\sqrt{d+2}-3}{d-1}\right\},\ A(X_2)=\left\{\frac{1}{\sqrt{d+2}},-\frac{1}{\sqrt{d+2}},-1\right\},\\
A(X_1,X_2)&=A(X_2,X_3)=\left\{\frac{1}{\sqrt{d-1}},-\frac{1}{\sqrt{d-1}}\right\},\ A(X_1,X_3)=\left\{\frac{\sqrt{d+2}+3}{d-1},\frac{-\sqrt{d+2}+3}{d-1},-1\right\}. 
\end{align*}

\item[{\rm (iii)}] A $Q$-polynomial coherent configuration of type $\begin{pmatrix} 
3 & 2 & 3 \\ 
2 & 4 & 2 \\ 
3 & 2 & 3 
\end{pmatrix}$ with the second eigenmatrices:  
\begin{align*}
Q^{(i,i)}&=\left(
\begin{array}{ccc}
1 & d-1 & \frac{1}{6} (d-2) (d-1) \\
1 & \sqrt{d+2}-3 & 2-\sqrt{d+2} \\
1 & -\sqrt{d+2}-3 & \sqrt{d+2}+2 \\
\end{array}
\right) \text{ for }(i,i)\in\{(1,1),(3,3)\}\\
Q^{(i,j)}&=\left(
\begin{array}{ccc}
1 & \sqrt{d-1}  \\
1 & -\sqrt{d-1}  \\
\end{array}
\right) \text{ for }(i,i)\in\{(1,2),(3,2),(2,1),(2,3)\}\\
Q^{(i,j)}&=\left(
\begin{array}{ccc}
1 & -d+1 & \frac{1}{6} (d-2) (d-1) \\
1 & -\sqrt{d+2}+3 & 2-\sqrt{d+2} \\
1 & \sqrt{d+2}+3 & \sqrt{d+2}+2 \\
\end{array}
\right) \text{ for }(i,i)\in\{(1,3),(3,1)\}\\
Q^{(2,2)}&=\left(
\begin{array}{cccc}
1 & d-1 & \frac{1}{3} (d-2) (d+2) & \frac{1}{3} (d-2) (d-1) \\
1 & \frac{d-1}{\sqrt{d+2}} & -1 & -\frac{d-1}{\sqrt{d+2}} \\
1 & -\frac{d-1}{\sqrt{d+2}} & -1 & \frac{d-1}{\sqrt{d+2}} \\
1 & 1-d & \frac{1}{3} (d-2) (d+2) & -\frac{1}{3} (d-2) (d-1) \\
\end{array}
\right).
\end{align*}

\end{enumerate}
\end{theorem}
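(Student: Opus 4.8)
The plan is to prove the three existence statements equivalent via the cycle (i)$\Rightarrow$(ii)$\Rightarrow$(iii)$\Rightarrow$(i). For (i)$\Rightarrow$(ii) I start from a tight spherical $5$-design $D=X\cup-X$ of minimal type with vector $\alpha$. The first move is to pin down $\|\alpha\|$: since $\langle\alpha,x\rangle\in\{0,\pm1\}$ we have $\langle\alpha,x\rangle^4=\langle\alpha,x\rangle^2$, so equating the degree-$2$ and degree-$4$ moment identities of the $5$-design, $\sum_{x\in D}\langle\alpha,x\rangle^2=\tfrac{|D|}{d}\|\alpha\|^2$ and $\sum_{x\in D}\langle\alpha,x\rangle^4=\tfrac{3|D|}{d(d+2)}\|\alpha\|^4$, forces $\|\alpha\|^2=\tfrac{d+2}{3}$. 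Consequently $\langle\alpha,\alpha/\|\alpha\|\rangle=\|\alpha\|>1$, so $\pm\alpha/\|\alpha\|\notin D$ and Theorem \ref{thm:derived} applies with unit pole $\alpha/\|\alpha\|$ and $s=|\{0,\pm1\}|=3$; the three derived codes $X_1=\mathcal L_{\alpha,1}(D),X_2=\mathcal L_{\alpha,0}(D),X_3=\mathcal L_{\alpha,-1}(D)$ are therefore $(5+1-3)=3$-designs in $\mathbb S^{d-2}$. The sizes follow from counting via the second moment, $X_3=-X_1$ and $X_2=-X_2$ from $D=-D$, and the angle sets from the derived inner-product identity $\langle u,v\rangle=\big(\langle x,y\rangle-\tfrac{3\beta_i\beta_j}{d+2}\big)/\sqrt{(1-\tfrac{3\beta_i^2}{d+2})(1-\tfrac{3\beta_j^2}{d+2})}$ combined with $A(D)=\{\pm\tfrac{1}{\sqrt{d+2}},-1\}$. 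The hypothesis $d>7$ enters exactly here: both $X_1\cap(-X_1)=\emptyset$ and the listed angles being distinct from $\pm1$ reduce to $(d-7)(d-2)\neq0$.

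For (ii)$\Rightarrow$(iii), note every $\widetilde s_{i,j}=|A(X_i,X_j)\setminus\{\pm1\}|=2$ while $t_j=3$, so condition (i) of Lemma \ref{Suda-thm26}, namely $\widetilde s_{i,j}+\widetilde s_{j,k}-2=2\le3$, holds for all triples and the inner-product relations make $X_1\cup X_2\cup X_3$ a coherent configuration; counting the identity relation plus $|A(X_i,X_j)|$ relations per block gives the type $\begin{pmatrix}3&2&3\\2&4&2\\3&2&3\end{pmatrix}$. The symmetric fibers $X_1,X_3$ are $2$-distance $2$-designs, hence strongly regular graphs with three eigenspaces, and $X_2$ is an antipodal $3$-distance set yielding a symmetric $3$-class scheme with four eigenspaces, which accounts for all $24$ idempotent pieces matching the $24$ relations. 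I would then take the idempotent basis $\{E_h^{(i,j)}\}$ from the harmonic characteristic matrices $H_0,H_1,H_2$; properties (B1)--(B4) are read off directly, and because all fibers sit on the common sphere $\mathbb S^{d-2}$ and share the Gegenbauer polynomials, each $E_h^{(i,j)}$ is an entrywise degree-$h$ polynomial of $E_1^{(i,j)}$, giving $Q$-polynomiality.

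The eigenmatrix verification is the step I expect to be the main obstacle. The degree-$0$ and degree-$1$ columns are immediate: $Q^{(i,j)}_{\cdot,0}=1$ and $Q^{(i,j)}_{\cdot,1}=G_1^{(d-1)}(\alpha)=(d-1)\alpha$ at each relation angle (for instance $(d-1)\cdot\tfrac{\sqrt{d+2}-3}{d-1}=\sqrt{d+2}-3$). The trouble is the higher columns: since each $X_i$ is only a $3$-design, the degree-$2$ idempotents (and the degree-$3$ one on $X_2$) are genuine scheme idempotents of rank strictly below $h_2^{(d-1)},h_3^{(d-1)}$, so their entries are not Gegenbauer evaluations and cannot be written down by inspection. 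The plan is to compute the fiber valencies from the moment equations and distance-invariance (Theorem \ref{thm:di}), assemble the first eigenmatrix $P$ of each fiber, and invert it ($Q=|X_i|P^{-1}$) to obtain the dual eigenvalues and the multiplicities $Q_{0,h}$; the latter are in fact forced by $\sum_h m_h=|X_i|$ with $m_0=1,m_1=d-1$, giving for instance $m_2=|X_1|-d=\tfrac{(d-1)(d-2)}{6}$ and the four multiplicities $1,d-1,\tfrac{(d-2)(d+2)}{3},\tfrac{(d-2)(d-1)}{3}$ on $X_2$. Carrying this bookkeeping consistently across all nine blocks while respecting (B3) and (B4) is the technical heart of the theorem.

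Finally, for (iii)$\Rightarrow$(i) I reverse the derived-code construction. The global degree-$1$ idempotent $\mathcal E_1=\sum_{i,j}E_1^{(i,j)}$ has rank $d-1$ and factors as a Gram matrix, embedding $X_1\sqcup X_2\sqcup X_3$ onto $\mathbb S^{d-2}$ with inner products read from the degree-$1$ column of the $Q$-matrices. I then lift to $\mathbb R^d$ by appending one coordinate equal to $\sqrt{3/(d+2)}$ on fiber $1$, its negative on fiber $3$, and $0$ on fiber $2$, rescaling the embedded parts accordingly; the off-diagonal blocks $Q^{(i,j)}$ ensure that every resulting pairwise inner product lies in $\{\pm\tfrac{1}{\sqrt{d+2}},-1\}$. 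Thus the lifted set $D$ is antipodal, equiangular with angle $\arccos\tfrac{1}{\sqrt{d+2}}$, has $|D|=d(d+1)$, and—by the disjointness encoded in the type—spans exactly $\binom{d+1}{2}$ distinct lines, attaining the absolute bound. Hence $D$ is a maximal ETF doubled, i.e. a tight spherical $5$-design, and it is of minimal type with respect to the vector of norm $\sqrt{(d+2)/3}$ along the appended coordinate axis by construction, closing the cycle.
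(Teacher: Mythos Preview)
Your proposal is correct and follows essentially the same approach as the paper. The paper organizes the argument as (i)$\Leftrightarrow$(ii) and (ii)$\Leftrightarrow$(iii) rather than your cycle, but the ingredients are identical: the moment identities pin down $\|\alpha\|^2=(d+2)/3$; Theorem~\ref{thm:derived} gives the three derived $3$-designs; Lemma~\ref{Suda-thm26} yields the coherent configuration since all $\widetilde s_{i,j}=2$; the idempotents are built from the characteristic matrices $H_0,H_1$ with the top ones defined as complements; and the realization step uses the rank-$(d-1)$ Gram matrix assembled from the $E_1^{(i,j)}$. One small correction: your global matrix $\sum_{i,j}E_1^{(i,j)}$ is not itself idempotent---you need the factor $\tfrac{1}{3}$ (as the paper has), since $E_1^{(i,j)}E_1^{(j,h)}=E_1^{(i,h)}$ gives the square equal to three times the sum. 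Your direct jump (iii)$\Rightarrow$(i) via the absolute bound is a mild shortcut over the paper's two-step (iii)$\Rightarrow$(ii)$\Rightarrow$(i), but the content is the same.
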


\begin{proof}
(i)$\Rightarrow$(ii):
Let $D$ be a tight spherical $5$-design in $\mathbb{S}^{d-1}$ of minimal type. Then $D$ has the form $D=X\cup -X$, where $X$ is a maximal ETF with parameters $(d, \frac{d(d+1)}{2})$. 
Since $D$ is a spherical $5$-design, for all $y\in\mathbb{R}^d$ we have \cite{Ven01,BMV04}
\begin{subequations}
\begin{align}
\sum_{x\in D}\langle y,x \rangle^2&=\frac{1}{d}\cdot |D|\cdot \langle y,y \rangle,\label{sperical5design-2} \\
\sum_{x\in D}\langle y,x \rangle^4&=\frac{3}{d(d+2)}\cdot |D|\cdot\langle y,y \rangle^2 \label{sperical5design-4}.
\end{align}
\end{subequations}
Since $D$ is of minimal type, there exists  $ \alpha\in \mathbb{R}^d$ such that $\langle  \alpha, x\rangle\in \{0,\pm1\}$ for all $x\in D$.
For $\ell\in\{0,1\}$, we define 
\begin{equation*}
n_\ell(\alpha):=|\{x\in D \mid \langle \alpha,x\rangle=\pm \ell\}|.
\end{equation*}
Then we have
\begin{equation}\label{sum}
n_0(\alpha)+n_1(\alpha)=|D|=d(d+1),
\end{equation}
and by equation \eqref{sperical5design-2} and \eqref{sperical5design-4} we have
\begin{equation*}
n_1(\alpha)
=\frac{1}{d}\cdot |D|\cdot \langle \alpha,\alpha \rangle
=\frac{3}{d(d+2)}\cdot |D|\cdot\langle \alpha,\alpha \rangle^2.
\end{equation*}
It follows that  
\begin{equation}\label{n0n1}
\langle \alpha,\alpha\rangle=\frac{d+2}{3}, \quad n_0( \alpha)=\frac{2(d-1)(d+1)}{3}\quad\text{and}\quad n_1( \alpha)=\frac{(d+1)(d+2)}{3}. 
\end{equation} 
After suitably transforming the set $D$ and the vector $\alpha$, we may assume that 
\begin{equation*}
\alpha=\bigg(\sqrt{\frac{d+2}{3}},0,\ldots,0\bigg).	
\end{equation*}
Let 
\begin{equation*}
X_1=\mathcal{L}_{\alpha,1}(D),\quad X_2=\mathcal{L}_{\alpha,0}(D)\quad\text{and}\quad X_3=\mathcal{L}_{\alpha,-1}(D).
\end{equation*}
Since $D$ is antipodal, we have $X_3=-X_1$ and $X_2=-X_2$. Moreover, by \eqref{n0n1} we have $|X_1|=|X_3|=n_1( \alpha)/2$ and $|X_2|= n_0( \alpha)$. Theorem \ref{thm:derived} shows that each $X_i$ is a spherical $3$-design in $\mathbb{S}^{d-2}$, and we can check that their angle sets satisfy the desired conditions in (ii). By our assumption that $d>7$, we have $X_1\cap -X_1=\emptyset$. This completes the proof. 

(ii)$\Rightarrow$(i): We proceed the converse implication above. 
Let 
\begin{equation*}
\widetilde{X}_1=\bigg\{\Big(\sqrt{\tfrac{3}{d+2}},\sqrt{\tfrac{d-1}{d+2}}\cdot x\Big)\ \Big|\  x\in X_1\bigg\},\  \widetilde{X}_2=\bigg\{(0,x) \ \Big|\  x\in X_2\bigg\},\ \widetilde{X}_3=-\widetilde{X}_1.
\end{equation*}
Set $D=\bigcup_{i=1}^3 \widetilde{X}_i$.
Then it is routinely shown that $D$ is an antipodal spherical $3$-distance set in $\mathbb{S}^{d-1}$ with attaining the absolute bound for the spherical $3$-distance set. 
Thus it turns out that $D$ is a tight spherical $5$-design in $\mathbb{S}^{d-1}$.

(ii)$\Rightarrow$(iii): We use $\mathcal{C}$ to denote the set $X:=\bigcup_{i=1}^3 X_i$ with binary relations defined from distances. We first prove that $\mathcal{C}$ forms a coherent configuration. 
For $i,j\in\{1,2,3\}$, let 
\begin{equation*}
\widetilde{s}_{i,j}:=|A(X_i,X_j)\backslash\{\pm 1\}|	\quad\text{and}\quad {s}_{i,j}:=|A(X_i,X_j)|.
\end{equation*}
Then we have
\begin{equation*}
(\widetilde{s}_{i,j})_{i,j=1}^3=\begin{pmatrix} 
2 & 2 & 2 \\ 
2 & 2 & 2 \\ 
2 & 2 & 2 
\end{pmatrix}\quad\text{and}\quad ({s}_{i,j})_{i,j=1}^3=\begin{pmatrix} 
2 & 2 & 3 \\ 
2 & 3 & 2 \\ 
3 & 2 & 2 
\end{pmatrix}.
\end{equation*}
Note that for each $i\in\{1,2,3\}$, $X_i$ is a spherical $t_i$-design where $t_i:=3$, so condition (i) in Lemma \ref{Suda-thm26} is satisfied for each $(i,j,h) \in\{1,2,3\}^3$. Combining with the fact that $X_1\cap X_2=\emptyset$, $X_3\cap X_2=\emptyset$ and $X_1=-X_3$, by Lemma \ref{Suda-thm26} we can conclude that the set $X=\bigcup_{i=1}^3 X_i$ with binary relations defined from distances forms a coherent configuration.  
Furthermore, by \cite[Theorem 7.4]{DGS77} and \cite{BB09a} we know that $X_i$ with binary relations defined from distances forms a symmetric association scheme for each $i\in\{1,2,3\}$.

We next construct a basis of $\mathcal{C}$ consisting of primitive idempotents, and show that $\mathcal{C}$ is $Q$-polynomial.
For each $\ell\geq 0$, let $H_{\ell}$ be the $\ell$-th characteristic matrix of $X$, and let $\widetilde{H}_{\ell}^{(i)}$ be defined in Definition \ref{alldef} for each $i\in\{1,2,3\}$.
Following \cite[Theorem 5.10]{Suda22}, we define $  {E}_\ell^{(i,j)}\in\mathbb{R}^{|X|\times |X|}$ for $i,j\in\{1,2,3\}, 0\leq \ell \leq s_{i,j}-\varepsilon_{i,j}$ as follows.
\begin{itemize}
\item For $i,j\in\{1,2,3\}$ and $\ell\in\{0,1\}$,  $E_\ell^{(i,j)}=\frac{1}{\sqrt{|X_i||X_j|}}\widetilde{H}_\ell^{(i)}(\widetilde{H}_\ell^{(j)})^\top$.
\item For $i=j=2$, $E_2^{(2,2)}=\frac{c}{|X_2|}\widetilde{H}_2^{(2)}(\widetilde{H}_2^{(2)})^\top$ for $c=\frac{|X_2|-2}{(d+1)(d-2)}$.  
\item For $i\in\{1,2,3\}$, $E_{s_{i,i}}^{(i,i)}=\Delta_{X_i}-\sum\limits_{k=0}^{s_{i,i}-1}E_k^{(i,i)}$.
\item For $(i,j)\in \{(1,3),(3,1)\}$, $E_{2}^{(i,j)}=A_3^{(i,j)}-\sum\limits_{k=0}^1(-1)^kE_k^{(i,j)}$, where $A_3^{(i,j)}$ is the adjacency matrix defined by inner product $-1$ between $X_i$ and $X_j$. 
\end{itemize}

We next verify that $  {E}_\ell^{(i,j)}\in\mathbb{R}^{|X|\times |X|}$ for $i,j\in\{1,2,3\}, 0\leq \ell \leq s_{i,j}-\varepsilon_{i,j}$ forms a basis that satisfies the condition (B1)-(B4) and the $Q$-polynomial property. It is easy to check that condition (B1) and (B3) hold. We next show condition (B4) holds.

It is clear that $E_\ell^{(i,j)}E_m^{(i',j')}=0$ for $i,j,i',j'\in\{1,2,3\}$ with $j\neq i'$, $\ell\in\{0,1,\ldots,s_{i,j}-\varepsilon_{i,j}\},m\in\{0,1,\ldots,s_{i',j'}-\varepsilon_{i',j'}\}$. 
In the following we will show that for $i,j,h\in\{1,2,3\}$ and $\ell\in\{0,1,\ldots,s_{i,j}-\varepsilon_{i,j}\},m\in\{0,1,\ldots,s_{j,h}-\varepsilon_{j,h}\}$, $  {E}_\ell^{(i,j)}  {E}_m^{(j,h)}=\delta_{\ell,m}  {E}_\ell^{(i,h)}$. 

Define the sets
\begin{equation*}
I_1:=\{1,2,3\}^3\setminus \{(1,3,1),(2,2,2),(3,1,3)\} \quad \text{and}\quad	I_2:=\{(1,3,1),(2,2,2),(3,1,3)\}.
\end{equation*}
Note that $s_{i,j}+s_{j,h}-2\leq t_j$ for each $(i,j,h)\in I_1$, and that $s_{i,j}+s_{j,h}-3= t_j$ for each $(i,j,h)\in I_2$. In the same manner \cite[Theorem 5.10]{Suda22}, one can check that  $  {E}_\ell^{(i,j)}  {E}_m^{(j,h)}=\delta_{\ell,m}  {E}_\ell^{(i,h)}$ holds for
\begin{itemize}
\item $(i,j,h)\in I_1$ and any possible $\ell,m$, 
\item $(i,j,h)\in I_2$ and any $\ell\in \{0,1,\ldots,s_{i,j}-1\},m\in\{0,1,\ldots,s_{j,h}-1\}$ with $(\ell,m)\neq (s_{i,j}-1,s_{j,h}-1)$. 
\end{itemize}
We deal with the following cases. 
\begin{itemize} 
\item The case where $(i,j,h)=(1,3,1)$ and $\ell=s_{1,3}-1,m=s_{3,1}-1$. In this case $A_3^{(i,j)}\widetilde{H}_k^{(j)}=(-1)^k\cdot\widetilde{H}_k^{(i)}$. 
Therefore $E_\ell^{(i,i)}E_m^{(i,i)}=\delta_{\ell,m}E_\ell^{(i,i)}$ for $i\in\{1,3\}$ if and only if $E_\ell^{(i,j)}E_m^{(j,h)}=\delta_{\ell,m}E_\ell^{(i,h)}$ for $i,j,h\in\{1,3\}$.  
\item The case where $(i,j,h)=(2,2,2)$ and $\ell=s_{1,3}-1,m=s_{3,1}-1$ follows from the fact that $X_2$ is a $Q$-polynomial association scheme and the matrices $E_\ell^{(2,2)}$ ($\ell\in\{0,1,2,3\}$) are primitive idempotents \cite{BB09a}.  
\item The case where $(i,j,h)=(3,1,3)$ and $\ell=s_{1,3}-1,m=s_{3,1}-1$ follows similarly as the case where $(i,j,h)=(1,3,1)$ and $\ell=s_{1,3}-1,m=s_{3,1}-1$. 
\end{itemize}
Therefore, we obtain that condition (B4) holds. Using a similar analysis in \cite{Suda22}, one can show that condition (B2) holds. It remains to check the $Q$-polynomial property.

We define the polynomial ${v}_\ell^{(i,j)}(x)$ for $i,j\in\{1,2,3\}, 0\leq \ell \leq s_{i,j}-\varepsilon_{i,j}$ as follows.
\begin{itemize}
\item For $i,j\in\{1,2,3\}$ and $\ell\in\{0,1\}$,  $v_\ell^{(i,j)}(x)=G_\ell^{(d)}(\frac{x}{d})$.
\item For $i=j=2$, $v_2^{(2,2)}(x)=c\cdot G_2^{(d)}(\frac{x}{d})$ for $c=\frac{|X_2|-2}{(d+1)(d-2)}$. 
\item For $i\in\{1,3\}$, $v_{2}^{(i,i)}(x)=|X_i|\cdot F_i(\frac{x}{d})-G_0^{(d)}(\frac{x}{d})-G_1^{(d)}(\frac{x}{d})$. For $i=2$, $v_{3}^{(i,i)}(x)=|X_i|\cdot F_i(\frac{x}{d})-G_0^{(d)}(\frac{x}{d})-G_1^{(d)}(\frac{x}{d})-c\cdot G_2^{(d)}(\frac{x}{d})$. Here, $c=\frac{|X_2|-2}{(d+1)(d-2)}$ and  $F_i(x):=\prod_{\alpha\in A(X_i)}\frac{x-\alpha}{1-\alpha}$ for each $i\in\{1,2,3\}$.
\item For $(i,j)\in \{(1,3),(3,1)\}$, $v_{2}^{(i,j)}(x)=F_3^{(i,j)}(\frac{x}{d})-G_0^{(d)}(\frac{x}{d})-G_1^{(d)}(\frac{x}{d})$, where $F_3^{(i,j)}(x):=\prod_{\alpha\in A(X_i,X_j),\alpha\neq -1}\frac{x-\alpha}{-1-\alpha}$. 
\end{itemize}
Then we can check that the $Q$-polynomial property holds. 

(iii)$\Rightarrow$(ii): 
Consider the matrix $G=\frac{1}{3}\sum_{i,j=1}^3 E_1^{(i,j)}$. Since $G^\top=G$ and $G^2=G$, the matrix $G$ is positive semi-definite. 
The rank of $G$ is $\mathrm{rank}\ G=\mathrm{tr}(G)=\sum_{i=1}^3 \frac{1}{3}\mathrm{tr}E_{1}^{(1,1)}=m_1^{(1,1)}=d-1$. 
Then there is a finite set $X=\bigcup_{i=1}^3 X_i$ in $\mathbb{R}^{d-1}$ with its gram matrix $G$ such that the gram matrix of $X_i$ is $E_1^{(i,i)}$. 
Then the inner product between $X_i$ and $X_j$ ($i\neq j$) appears in the entries of $E_1^{(i,j)}$, and by \cite{BI84} we have $|X_1|=|X_3|=\frac{(d+1)(d+2)}{3},|X_2|=\frac{2(d-1)(d+1)}{3}$. 
Therefore $X_i$ ($i\in\{1,2,3\}$) are the desired subsets in $\mathbb{S}^{d-1}$. 
\end{proof}

In the following we examine whether the known tight spherical $5$-designs are of minimal type.
Recall that tight spherical $5$-designs are known to exist only for $d=2,3,7$ and $23$ \cite{BB09b}. 

\begin{example}\label{tight-example}
When $d=2$, the vertices of a regular $6$-gon forms a tight spherical $5$-design, and one can easily check that it is of minimal type. 
When $d=3$, twelve vertices of an icosahedron on $\mathbb{S}^2$ form a tight spherical $5$-design, and it is not minimal type, because in this case we can calculate that $n_{0}(\alpha)$ and $n_{1}(\alpha)$ in \eqref{n0n1} are not integers.
The tight spherical $5$-designs in $\mathbb{R}^7$ and $\mathbb{R}^{23}$ are the shortest vectors of the lattices $\mathbb{E}_7^*$ and $\mathbb{Q}_{23}(6)^{+2}$, respectively \cite{GY18,CS88,Neumaier,LY}.
These two lattices are both strongly perfect lattices of minimal type since their   
	Berg\'e-Martinet invariants achieve the lower bound \eqref{minimal type lattice} \cite{CS88}.
Hence,  the tight spherical $5$-designs in $\mathbb{R}^7$ and $\mathbb{R}^{23}$ are of minimal type.
	
\end{example}

Theorem \ref{thm:cc} implies the following corollary, which shows that each tight spherical $5$-design of minimal type in $\mathbb{S}^{d-1}$  gives rise to an ETF with parameters $(d-1,\frac{(d-1)(d+1)}{3})$ and a strongly regular graph with parameters in \eqref{srg-para2}. 
In particular, Corollary \ref{coro1} gives a sufficient condition for (i) implying (ii) in Conjecture \ref{conj1}.
   
\begin{corollary}\label{coro1}
Let $d=k^2-2$ where $k>3$ is an odd integer. Assume that there exists a tight spherical $5$-design $D$ in $\mathbb{S}^{d-1}$ of minimal type. Then,

\begin{enumerate}
\item[\rm (i)] there exists an ETF with parameters $(d-1,\frac{(d-1)(d+1)}{3})$;
\item[\rm (ii)] there exists a strongly regular graph with parameters 
\begin{equation}\label{srg-para2}
\Big(\frac{1}{6}k^2(k^2-1),\frac{1}{12}(k-1)(k-2)(k^2-3),\frac{1}{24}(k+2)(k-1)(k-3)(k-5),\frac{1}{24}(k^2-1)(k-2)(k-3)\Big).
\end{equation}

\end{enumerate}

\end{corollary}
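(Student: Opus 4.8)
The plan is to deduce both statements directly from Theorem \ref{thm:cc}. Since $d=k^2-2$ with $k>3$ odd forces $d\geq 23>7$, the existence of a tight spherical $5$-design $D$ of minimal type in $\mathbb{S}^{d-1}$ furnishes, by the equivalences (i)$\Leftrightarrow$(ii)$\Leftrightarrow$(iii) of Theorem \ref{thm:cc}, the spherical $3$-designs $X_1,X_2,X_3\subset\mathbb{S}^{d-2}$ with the prescribed angle sets together with the $Q$-polynomial coherent configuration whose fibers carry the eigenmatrices displayed there. The ETF of part (i) will be extracted from $X_2$, and the strongly regular graph of part (ii) from the fiber association scheme on $X_1$; writing $\sqrt{d+2}=k$ throughout turns every quantity into a polynomial in $k$.

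For part (i), recall that $X_2=-X_2$ is antipodal with $A(X_2)=\{\tfrac{1}{\sqrt{d+2}},-\tfrac{1}{\sqrt{d+2}},-1\}$ and $|X_2|=\tfrac{2(d-1)(d+1)}{3}$. Choosing one vector from each antipodal pair produces a set $Y\subset\mathbb{S}^{d-2}\subset\mathbb{R}^{d-1}$ of size $n:=\tfrac{(d-1)(d+1)}{3}$ whose pairwise inner products lie in $\{\pm\tfrac{1}{\sqrt{d+2}}\}$, since collapsing antipodal pairs removes the value $-1$; thus $Y$ is equiangular with all inner products of absolute value $\tfrac{1}{\sqrt{d+2}}$. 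A direct computation shows that this equals the Welch bound $\theta_{n,d-1}=\sqrt{\tfrac{n-(d-1)}{(d-1)(n-1)}}$. Moreover, because $X_2$ is a spherical $3$-design it is a tight frame, and the relation $\sum_{y\in Y}yy^{\top}=\tfrac{1}{2}\sum_{x\in X_2}xx^{\top}$ shows that $Y$ is a tight frame as well. Hence $Y$ attains the Welch bound and forms an ETF with parameters $(d-1,\tfrac{(d-1)(d+1)}{3})$, proving (i).

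For part (ii), I would use the fiber of the coherent configuration on $X_1$, which by Theorem \ref{thm:cc}(iii) is a symmetric $2$-class association scheme with second eigenmatrix $Q^{(1,1)}$, equivalently a strongly regular graph. Write $A(X_1)=\{\alpha_1,\alpha_2\}$ with $\alpha_1=\tfrac{\sqrt{d+2}-3}{d-1}$ and $\alpha_2=\tfrac{-\sqrt{d+2}-3}{d-1}$, let $A_1,A_2$ be the corresponding adjacency matrices, and let $G=(X_1,R_2)$ be the graph on $|X_1|=\tfrac{(d+1)(d+2)}{6}$ vertices with adjacency matrix $A_2$. To read off its parameters I would first take the multiplicities $\bigl(1,\,d-1,\,\tfrac{1}{6}(d-2)(d-1)\bigr)$ from the top row of $Q^{(1,1)}$, and then recover the eigenvalues of $G$ either by inverting $Q^{(1,1)}$ or, more transparently, from the identity $I+\alpha_1A_1+\alpha_2A_2=\tfrac{|X_1|}{d-1}E_1$ for the Gram matrix of the embedding $X_1\subset\mathbb{R}^{d-1}$, the idempotent $E_1$ being the one of multiplicity $d-1$. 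Evaluating this identity on the three eigenspaces gives the valency $k_G$ of $G$ and its two restricted eigenvalues; substituting $d=k^2-2$ yields $k_G=\tfrac{1}{12}(k-1)(k-2)(k^2-3)$ and restricted eigenvalues $\tfrac{k-1}{2}$ and $-\tfrac{1}{12}(k-1)(k-2)(k+3)$. Finally the standard strongly-regular relations $\mu=k_G+rs$ and $\lambda=\mu+r+s$ produce exactly the parameters in \eqref{srg-para2}.

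The conceptual work is entirely contained in Theorem \ref{thm:cc}; what remains is computation, and the delicate points are two. First, one must verify the Welch-bound equality in (i), which is what upgrades $Y$ from merely equiangular to a genuine ETF. Second, and more error-prone, is the bookkeeping in (ii): one must select the correct one of the two relations on $X_1$ (the graph attached to $\alpha_2$, not $\alpha_1$), keep the pairing of eigenvalues with multiplicities straight, and carry out the substitution $d=k^2-2$ together with the factorizations (for instance $12+(k+1)(k+2)(k-3)=(k-1)(k-2)(k+3)$) that collapse the eigenvalues into the clean product form of \eqref{srg-para2}. I expect this substitution-and-factorization step to be the main obstacle to a clean write-up.
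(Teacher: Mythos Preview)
Your proposal is correct. Part (i) is essentially identical to the paper's argument: both take a half of $X_2=\mathcal{L}_{\alpha,0}(D)$ and observe that the resulting set of size $\tfrac{(d-1)(d+1)}{3}$ with angles $\pm\tfrac{1}{\sqrt{d+2}}$ meets the Welch bound in $\mathbb{R}^{d-1}$; your extra remark that $Y$ inherits the tight-frame property from the $3$-design $X_2$ is a legitimate alternative to checking the Welch equality directly.

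For part (ii) the two arguments diverge. The paper does not touch the eigenmatrix $Q^{(1,1)}$ at all; instead it observes that $X_1$ is a two-distance spherical $3$-design (hence a two-distance tight frame with $a=\tfrac{-k-3}{d-1}$, $b=\tfrac{k-3}{d-1}$) and plugs these into the closed formulas of \cite[Proposition~3.2]{BGOY15} for the strongly regular graph attached to a two-distance tight frame, obtaining \eqref{srg-para2} immediately. Your route goes through the association-scheme side of Theorem~\ref{thm:cc}(iii): you recover the first eigenmatrix from $Q^{(1,1)}$ (equivalently, from the Gram identity $I+\alpha_1A_1+\alpha_2A_2=\tfrac{|X_1|}{d-1}E_1$), read off the restricted eigenvalues $\tfrac{k-1}{2}$ and $-\tfrac{1}{12}(k-1)(k-2)(k+3)$ of the graph on relation $R_2$, and then use $\mu=k_G+rs$, $\lambda=\mu+r+s$. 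Both approaches are short once the relevant machinery is in hand; the paper's is a one-line citation, while yours is self-contained but carries exactly the bookkeeping risk you flagged (choosing $R_2$ rather than $R_1$, and the factorization $12+(k+1)(k+2)(k-3)=(k-1)(k-2)(k+3)$). Your computations check out.
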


\begin{proof}
(i)
Let $\alpha\in \mathbb{R}^d$ be such that $\langle \alpha,x\rangle\in\{0,\pm 1\}$, $\forall x\in D$.
By the proof of Theorem \ref{thm:cc}, a half of the derived code $\mathcal{L}_{\alpha,0}(D)$ has size $\frac{(d-1)(d+1)}{3}$ and the angle set $\{\pm\frac{1}{\sqrt{d+2}}\}$, so it attains the Welch bound \eqref{welch} and forms an ETF with parameters $(d-1,\frac{(d-1)(d+1)}{3})$. 

(ii) The authors in \cite[Proposition 3.2]{BGOY15} showed that, for  a two-distance tight frame $Y=\{y_i\}_{i=1}^n\subset\mathbb{S}^{l-1}$ with $A(Y)=\{a,b\}$, $a^2\neq b^2$, if we construct a graph $G$ with $n$ vertices where vertex $i$ and vertex $j$ are adjacency if $\langle y_i,y_j\rangle=a$, then $G$ forms a strongly regular graph. The parameters $(n,n_a,\lambda,\mu)$ of $G$ can be calculated as
\begin{equation}\label{srg-para}
n_a=\frac{\frac{n}{l}-1-(n-1)b^2}{a^2-b^2},\quad \lambda=\frac{(\frac{n}{l}-2)a-2(n_a-1)ab-(n-2\cdot n_a)\cdot b^2}{(a-b)^2},\quad \mu=\frac{n_a(n_a-\lambda-1)}{n-n_a-1}.
\end{equation}
Note that $X_1$ in Theorem \ref{thm:cc} (ii) is a spherical $3$-design with $|A(X_1)|=2$, so it actually forms a two-distance tight frame. 
Then, substituting $l=d-1$, $n=|X_1|=\frac{(d+1)(d+2)}{6}$, $a=\frac{-\sqrt{d+2}-3}{d-1}=\frac{-k-3}{d-1}$ and $b=\frac{\sqrt{d+2}-3}{d-1}=\frac{k-3}{d-1}$ into \eqref{srg-para}, we see that $X_1$ gives rise to a strongly regular graph with the parameters described in (ii).

\end{proof}

\begin{remark}
The existence of {\rm ETFs} with parameters $(d-1,\frac{(d-1)(d+1)}{3})$ is known only when $d=7$ and $23$, which are {\rm ETF}$(6,16)$ and {\rm ETF}$(22,176)$.
The strongly regular graphs with parameters in \eqref{srg-para2} are known to exist only when $k=3$ and $5$, which are $\mathrm{srg}(12,1,0,0)$ and $\mathrm{srg}(100,22,0,6)$, respectively. The existence of such strongly regular graphs are open for each odd integer $k\geq 7$. For example, the first two open cases are $\mathrm{srg}(392,115,18,40)$ and $\mathrm{srg}(1080,364,88,140)$.
\end{remark}

\subsection{Necessary conditions for tight spherical $5$-designs of minimal type }

In this section we present a necessary condition for a tight spherical $5$-design being of minimal type.
Theorem \ref{th1} rules out the possibility that tight spherical $5$-designs in $\mathbb{R}^{(2m+1)^2-2}$ are of minimal type for $m=5,11,21,29,...$. 

\begin{theorem}\label{th1}
Assume that $m$ is an odd integer satisfying $m\not\equiv1\ (\text{mod 3})$. Assume that $m(m+1)$ is not divisible by the square of an odd prime, and that $m+1$ is not a multiple of 8. Let $d=(2m+1)^2-2$. If there exists a tight spherical $5$-design in $\mathbb{R}^d$, then it is not of minimal type.

\end{theorem}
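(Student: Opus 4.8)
The plan is to argue by contradiction: assume a tight spherical $5$-design $D$ of minimal type exists in $\mathbb{R}^d$ with $d=k^2-2$, $k=2m+1$, and extract from it a combinatorial object whose existence is governed by a Bruck--Ryser--Chowla-type rational congruence that fails under the stated arithmetic hypotheses. First I would feed $D$ into Theorem~\ref{thm:cc} and Corollary~\ref{coro1}: minimality produces the $Q$-polynomial coherent configuration on $X_1\cup X_2\cup X_3$, the equiangular tight frame with parameters $(d-1,\frac{(d-1)(d+1)}{3})$ arising from $\mathcal{L}_{\alpha,0}(D)$, and the strongly regular graph with parameters \eqref{srg-para2}. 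Rewriting all data in terms of $k$, I would record the eigenvalues and multiplicities of these graphs; in particular the frame has common angle $1/k$, so its signature (Seidel) matrix realizes a regular two-graph whose two eigenvalues $\frac{k(k^2-4)}{3}$ and $-k$ are odd integers. Deriving once more via Lemma~\ref{thm:derived-DGS} and Theorem~\ref{thm:derived} turns the neighbourhoods inside $X_1$ into $2$-distance spherical $2$-designs, hence into further strongly regular graphs with eigenvalues that are again polynomial in $k$.

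Next I would verify that every ordinary feasibility condition is satisfied: after simplification in $k$, all valencies, multiplicities, eigenvalues and intersection numbers of these schemes, together with the Krein and absolute-bound inequalities, turn out to be non-negative integers for all admissible $k$. This is the crucial negative check, because it shows the obstruction cannot be a naive integrality failure and so forces us toward a genuine existence criterion. The key step is therefore to reduce the existence of the coherent configuration (equivalently, of the associated regular two-graph or strongly regular graph) to the existence of a symmetric $2$-$(v,\kappa,\lambda)$ design, or directly to the rational solvability of a ternary quadratic form $z^2=(\kappa-\lambda)x^2+(-1)^{(v-1)/2}\lambda y^2$, where $v,\kappa,\lambda$ are expressed as explicit functions of $k$. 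This is the Bruck--Ryser--Chowla mechanism, and it is the analytic heart of the argument.

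Finally I would carry out the number theory. Since $k=2m+1$ one has $k^2-1=4m(m+1)$, and the three hypotheses translate exactly into: $3\nmid k$ (equivalently $m\not\equiv1\pmod 3$); the odd part of $m(m+1)$ is squarefree; and $k+1\not\equiv0\pmod{16}$ (equivalently $m+1\not\equiv0\pmod 8$). These are precisely the conditions under which the relevant form is anisotropic at the prime $2$, at $3$, and at the odd primes dividing $m(m+1)$: the squarefree hypothesis prevents the offending Legendre symbols from being absorbed into square factors, while the two congruences fix the Hilbert symbols at $2$ and $3$. A Hasse--Minkowski local-global analysis then shows the congruence has no nontrivial rational solution, contradicting Bruck--Ryser--Chowla and hence the assumed existence. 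A standard density argument shows that infinitely many odd $m$ meet all three conditions (for instance $m=5,11,21,29,\dots$, giving $d=119,527,\dots$), which yields the stated infinitude.

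I expect the main obstacle to be the middle step: correctly identifying the symmetric design (or the precise integral incidence structure) forced by the $Q$-polynomial coherent configuration and pinning down its parameters $v,\kappa,\lambda$ as functions of $k$, since, as the feasibility check shows, the scheme-theoretic data alone never produce a contradiction. Once that design is isolated, the concluding Hilbert-symbol computation is delicate but essentially routine, and it is exactly this computation that dictates the shape of the hypotheses $m\not\equiv1\pmod3$, $m(m+1)$ squarefree-odd-part, and $m+1\not\equiv0\pmod8$.
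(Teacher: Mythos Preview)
Your proposal has a genuine gap at exactly the point you yourself flag as the ``main obstacle'': you never produce the symmetric $2$-$(v,\kappa,\lambda)$ design, nor even an argument that one must arise from the $Q$-polynomial coherent configuration of Theorem~\ref{thm:cc}. Strongly regular graphs, regular two-graphs, and equiangular tight frames do not in general force the existence of a symmetric design, so there is no Bruck--Ryser--Chowla congruence to violate until that reduction is actually carried out. Everything downstream of that step---the Hilbert-symbol computation, the claimed translation of the three hypotheses on $m$ into local anisotropy at $2$, $3$, and the primes dividing $m(m+1)$---is therefore speculative. Your post-hoc matching of the hypotheses to a putative local obstruction is suggestive but not evidence that such a form exists; the hypotheses could (and in fact do) arise for entirely different reasons.

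The paper's proof is completely different and far more direct: it is lattice-theoretic, not design-theoretic. One rescales the tight $5$-design to lie on the sphere of radius $\sqrt{k}$, $k=2m+1$, lets $\Lambda$ be the integral lattice it generates, and observes that a minimal-type vector $\alpha$ necessarily lies in the dual $\Lambda^*$ with $\langle\alpha,\alpha\rangle=(2m+1)/3$. The hypotheses on $m$ are then used, via results of Bannai--Munemasa--Venkov and Nebe--Venkov on the discriminant group of the auxiliary lattice $\Gamma=\tfrac{1}{\sqrt 2}\Lambda_+$, to pin down $\Lambda^*=\tfrac12\Lambda_+$ exactly; evenness of $\Gamma$ then forces every vector of $\Lambda^*$ to have integral norm. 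Since $m\not\equiv 1\pmod 3$ makes $(2m+1)/3\notin\mathbb{Z}$, this is an immediate contradiction. In particular, the condition $m\not\equiv 1\pmod 3$ is used only for this single non-integrality, while the squarefreeness and $8\nmid(m+1)$ conditions are precisely the input to the cited lemma giving $\Gamma^*/\Gamma\cong\mathbb{Z}/2\mathbb{Z}$---not Hilbert-symbol data.
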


To prove Theorem \ref{th1}, we first introduce some notations and lemmas from \cite{BMV04,NV13}.
For any $k>0$ and for any positive integer $d$, we denote
\begin{equation*}
\mathbb{S}^{d-1}(k):=\{x\in\mathbb{R}^d\ |\ \langle x,x\rangle={k}\}.	
\end{equation*}
A finite subset $D\subset \mathbb{S}^{d-1}(k)$ is called a spherical $t$-design if $\frac{1}{\sqrt{k}}D$ is a spherical $t$-design in the unit sphere $\mathbb{S}^{d-1}$. 

Let $d=k^2-2$, where $k=2m+1\geq 3$ is an odd integer.
Assume that $D=X\cup-X\subset\mathbb{S}^{d-1}(k)$ is a tight spherical $5$-design, where $X$ is a subset of $\mathbb{S}^{d-1}(k)$ such that $\frac{1}{\sqrt{k}} X$ is a maximal ETF in $\mathbb{R}^d$. Then we have
\begin{equation*}
\langle x,y\rangle=\pm 1,\ \forall\ x,y\in X, x\neq y.
\end{equation*}
Since $D\subset\mathbb{S}^{d-1}(k)$ is a spherical $5$-design, by \eqref{sperical5design-2} and \eqref{sperical5design-4} we have
\begin{subequations}
\begin{align}
\sum_{x\in D}\langle y,x \rangle^2&=\frac{k}{d}\cdot |D|\cdot \langle y,y \rangle,\label{sperical5design-23} \\
\sum_{x\in D}\langle y,x \rangle^4&=\frac{3k^2}{d(d+2)}\cdot |D|\cdot\langle y,y \rangle^2 \label{sperical5design-43}
\end{align}
\end{subequations}
for all $y\in\mathbb{R}^d$.
Let $\Lambda$ be the lattice generated by $X$, and let $\Lambda^*$ denote the dual lattice of $\Lambda$. 
Define the sublattice $\Lambda_+$ of $\Lambda$ by
\begin{equation*}
\Lambda_+:=\bigg \{\sum\limits_{x\in X}^{}c_{x}x\  |\  c_x\in\mathbb{Z},\sum\limits_{x\in X}c_x\equiv0\ (\text{mod 2}) \bigg \}
\end{equation*}
and set 
\begin{equation*}
\Gamma:=\frac{1}{\sqrt{2}}\Lambda_+.
\end{equation*}
A direct calculation shows that 
\begin{equation}
\langle \lambda,x\rangle\equiv0\ \text{(mod 2)}, \ \forall \ \lambda\in \Lambda_+,\ x\in X,\label{l+}
\end{equation}
(see also \cite[equation (21)]{BMV04}).
We need the following two lemmas.

\begin{lemma}{\rm{\cite[Lemma 4.5]{NV13}}}\label{lemma1}
Assume that $m$ is an odd integer. Assume that $m(m+1)$ is not divisible by the square of an odd prime, and that $m+1$ is not a multiple of 8.  Then we have 
\begin{equation*}
\Gamma^*\slash\Gamma\cong\mathbb{Z}\slash2\mathbb{Z}.
\end{equation*}
\end{lemma}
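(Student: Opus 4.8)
The plan is to turn the lemma into a single exact determinant identity and then verify it prime by prime, with each of the three arithmetic hypotheses controlling one family of primes.

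First I would assemble the two structural inputs. Since $\langle x,x\rangle=k$ and $\langle x,y\rangle=\pm1$ for distinct $x,y\in X$, the Gram matrix $G:=(\langle x,y\rangle)_{x,y\in X}$ of the $n=\tfrac{d(d+1)}2$ generators is an integral matrix $G=kI+S$, where $S$ is symmetric with zero diagonal and $\pm1$ entries. Summing \eqref{sperical5design-23} and \eqref{sperical5design-43} over $X$ rather than $D$ gives $\sum_{x\in X}xx^\top=\theta I_d$ with $\theta=\tfrac{k(d+1)}2=2m(m+1)k$, whence $G^2=\theta G$; thus $G$ has rank $d$ with every nonzero eigenvalue equal to $\theta$. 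Next, \eqref{l+} yields $\langle\lambda,\mu\rangle=\sum_x c_x\langle\lambda,x\rangle\in2\mathbb Z$ for all $\lambda,\mu\in\Lambda_+$, while on the generators $x\pm y$ of $\Lambda_+$ one has $\langle x\pm y,x\pm y\rangle=2(k\pm1)\in4\mathbb Z$ because $k$ is odd; hence $\Gamma=\tfrac1{\sqrt2}\Lambda_+$ is an even integral lattice. Using $[\Lambda:\Lambda_+]=2$ and the scaling by $1/\sqrt2$ in dimension $d$ gives $\det\Gamma=2^{2-d}\det\Lambda$. Since $|\Gamma^*/\Gamma|=\det\Gamma$ and every group of order two is cyclic, the lemma reduces to the single identity $\det\Gamma=2$, i.e. $\det\Lambda=2^{d-1}$.

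To compute $\det\Lambda$ I would pass to invariant factors. As $\Lambda$ is integral we have $\Lambda\subseteq\Lambda^*$ and $\det\Lambda=|\Lambda^*/\Lambda|$; realizing $\Lambda$ as the image of $\mathbb Z^n$ under $e_x\mapsto x$ with (saturated) radical $\ker G$ identifies $\Lambda^*/\Lambda$ with the torsion of $\mathbb Z^n/G\mathbb Z^n$, so $\det\Lambda$ is the product of the nonzero invariant factors of the integral matrix $G$. The reconstruction formula $v=\theta^{-1}\sum_{x\in X}\langle v,x\rangle x$ for $v\in\Lambda^*$ shows $\Lambda\subseteq\Lambda^*\subseteq\theta^{-1}\Lambda$, so the exponent of $\Lambda^*/\Lambda$ divides $\theta=2m(m+1)k$ and only primes dividing $2m(m+1)k$ can occur. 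Because $1/\sqrt2$ and $[\Lambda:\Lambda_+]=2$ are units at every odd prime, for odd $p$ one has $(\Gamma^*/\Gamma)_p\cong(\Lambda^*/\Lambda)_p$, and this $p$-part vanishes precisely when $\operatorname{rank}_{\mathbb F_p}G=d$. For an odd $p\mid\theta$, either $p\mid m(m+1)$ or $p\mid k$, and these alternatives are coprime since $\gcd(2m+1,m(m+1))=1$. When $p\mid m(m+1)$, the hypothesis that $m(m+1)$ carries no square of an odd prime gives $v_p(\theta)=1$, which I would use to force the $\mathbb Z_p$-Jordan form of $G$ to be unimodular of rank $d$ (equivalently $\operatorname{rank}_{\mathbb F_p}G=d$); a repeated factor $p^2\mid m(m+1)$ is exactly what would create $p$-torsion. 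When $p\mid k$ one has $G\equiv S\pmod p$, and I would combine the regular two-graph identity $S^2=\tfrac{k(k^2-5)}2\,S+(n-1)I$ with the $p$-adic valuation of $n-1=\tfrac{k^2(k^2-3)}2$ to pin down the $p$-adic structure of $S$ and again obtain $\operatorname{rank}_{\mathbb F_p}G=d$. Hence the odd part of $\Gamma^*/\Gamma$ is trivial. At $p=2$ the scaling by $\sqrt2$ is decisive, so I would work directly with $\Gamma$ over $\mathbb Z_2$: its roots $\tfrac{x\pm y}{\sqrt2}$ have norms $k-1=2m$ and $k+1=2(m+1)$, and with $m$ odd and $m+1\not\equiv0\pmod8$ we get $v_2(2m)=1$ and $v_2(2(m+1))\in\{2,3\}$; feeding this into the $2$-adic Jordan decomposition of $\Gamma$ should yield a discriminant form of order exactly $2$, giving $(\Gamma^*/\Gamma)_2\cong\mathbb Z/2\mathbb Z$. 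Combining this with the trivial odd part gives $\Gamma^*/\Gamma\cong\mathbb Z/2\mathbb Z$.

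The \emph{main obstacle} is precisely these local computations at the primes dividing $\theta$. Proving that the rank of $G$ does not drop modulo an odd prime is the delicate point: the squarefree hypothesis on $m(m+1)$ is what rules out odd torsion when $p\mid m(m+1)$, while the two-graph identity (with its valuation bookkeeping for $n-1$) is what handles the awkward case $p\mid k$, where $S^2\equiv0\pmod p$ makes the rank a priori hard to control. The second delicate point is the exact $2$-adic Jordan computation, where the hypotheses $m$ odd and $m+1\not\equiv0\pmod8$ are used to force the $2$-part to be exactly $\mathbb Z/2\mathbb Z$ rather than a larger $2$-group.
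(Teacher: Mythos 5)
The paper does not prove this lemma at all: it is imported verbatim from Nebe--Venkov \cite{NV13} (their Lemma~4.5), so there is no internal proof to compare against, only that source. Your global scaffolding is correct and is verifiably consistent with the setup of Section~3.2 of this paper: $G=kI+S$ with $G^2=\theta G$ and $\theta=2m(m+1)k$ (since $d+1=4m(m+1)$); evenness of $\Gamma$ from \eqref{l+} together with the generator norms $2(k\pm1)\in 4\mathbb{Z}$; the index/scaling computation $\det\Gamma=2^{2-d}\det\Lambda$; the identification of $\Lambda^*/\Lambda$ with the torsion of $\mathbb{Z}^n/G\mathbb{Z}^n$; the containment $\Lambda\subseteq\Lambda^*\subseteq\theta^{-1}\Lambda$; and the reduction of the lemma to the single statement $\det\Gamma=2$. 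All of these steps check out, and your identity $S^2=\tfrac{k(k^2-5)}{2}S+\tfrac{k^2(k^2-3)}{2}I$ with $n-1=\tfrac{k^2(k^2-3)}{2}$ is correct.

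However, the three local computations you defer \emph{are} the lemma, and two of them rest on inferences that fail as stated. (1) At an odd prime $p\mid m(m+1)$, the claim that $v_p(\theta)=1$ ``forces the $\mathbb{Z}_p$-Jordan form of $G$ to be unimodular'' is a non sequitur: $v_p(\theta)=1$ together with $\theta\Lambda^*\subseteq\Lambda$ bounds only the \emph{exponent} of the $p$-part of $\Lambda^*/\Lambda$, so it excludes $\mathbb{Z}/p^2\mathbb{Z}$ but is perfectly consistent with torsion $(\mathbb{Z}/p\mathbb{Z})^r$ with $r>0$; what is needed is a lower bound $\operatorname{rank}_{\mathbb{F}_p}\bar{G}\geq d$, and nothing in your plan produces one. (2) At $p\mid k$: the hypotheses of the lemma impose no condition whatsoever on $k=2m+1$, so here you would have to prove \emph{unconditionally} that no $p$-torsion occurs; but your two-graph identity reduces mod $p$ to $S^2\equiv 0$, which makes the image of $\bar{G}\equiv\bar{S}$ totally isotropic and yields only the useless upper bound $\operatorname{rank}_{\mathbb{F}_p}\bar{S}\leq n/2$ --- ``valuation bookkeeping'' for $n-1$ is not an argument, and it is unclear the rank can be controlled without further design identities (e.g., sixth-moment integrality for $v\in\Lambda^*$). (3) At $p=2$, knowing the $2$-valuations of the norms of the generators $\tfrac{x\pm y}{\sqrt{2}}$ does not determine the $2$-adic Jordan splitting of a rank-$d$ lattice; note moreover that the cheap direction is the lower bound (the chain $\Gamma\subsetneq\tfrac{1}{\sqrt{2}}\Lambda\subseteq\Gamma^*$ used in the proof of Theorem~\ref{th1} already gives $|\Gamma^*/\Gamma|\geq 2$, as does Milgram's formula, since an even lattice of odd rank cannot be unimodular), so the entire content is the upper bound $|\Gamma^*/\Gamma|\leq 2$, which your sketch asserts (``should yield'') but does not supply. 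In short: the reduction and localization frame is sound and in the same spirit as the lattice-theoretic argument of \cite{NV13}, but the decisive steps --- exactly the ones you yourself flag as the main obstacle --- are missing, and step (1) is built on a faulty implication.
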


\begin{lemma}{\rm{\cite[Lemma 3.7]{BMV04}}}\label{lemma2}
Assume that $m(m+1)$ is not a multiple of 8.  Then we have 
\begin{equation*}
\langle \lambda,\lambda\rangle\equiv0\ \text{(mod 4)}, \ \forall \ \lambda\in \Lambda_+.
\end{equation*}
In particular, $\Gamma$ is an even lattice.
\end{lemma}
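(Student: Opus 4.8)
The plan is to prove the congruence by a direct Gram-matrix expansion carried out modulo $4$, exploiting only two structural facts from the setup: for $x\neq y$ in $X$ the inner product $\langle x,y\rangle=\pm1$ is odd, and the diagonal value $\langle x,x\rangle=k=2m+1$ is odd as well. Fix $\lambda\in\Lambda_+$ and, using the definition of $\Lambda_+$, choose one representation $\lambda=\sum_{x\in X}c_x x$ with $c_x\in\mathbb{Z}$ and $\sum_{x\in X}c_x$ even; since the $\tfrac{d(d+1)}{2}$ generators in $X$ are linearly dependent, such a representation need not be unique, but $\langle\lambda,\lambda\rangle$ depends only on $\lambda$, so it suffices to evaluate it for this single even-sum representation. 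Expanding the Gram matrix gives
\begin{equation*}
\langle\lambda,\lambda\rangle=k\sum_{x\in X}c_x^2+\sum_{\substack{x,y\in X\\ x\neq y}}c_xc_y\langle x,y\rangle,
\end{equation*}
and the task reduces to controlling each summand modulo $4$.

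First I would reduce the off-diagonal term. Because $\langle x,y\rangle-1\in\{0,-2\}$ for $x\neq y$, the difference $\sum_{x\neq y}c_xc_y(\langle x,y\rangle-1)$ equals $-2$ times the sum of $c_xc_y$ over ordered pairs with $\langle x,y\rangle=-1$; this index set is symmetric in $x,y$, so the ordered sum is twice the corresponding unordered sum and the whole difference is a multiple of $4$. Hence
\begin{equation*}
\sum_{\substack{x,y\in X\\ x\neq y}}c_xc_y\langle x,y\rangle\equiv\sum_{\substack{x,y\in X\\ x\neq y}}c_xc_y=\Big(\sum_{x\in X}c_x\Big)^2-\sum_{x\in X}c_x^2\pmod 4.
\end{equation*}
Substituting back and using $k-1=2m$ yields
\begin{equation*}
\langle\lambda,\lambda\rangle\equiv 2m\sum_{x\in X}c_x^2+\Big(\sum_{x\in X}c_x\Big)^2\pmod 4.
\end{equation*}

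To finish I would invoke the parity of the coefficient sum. Since $\sum_x c_x$ is even, $(\sum_x c_x)^2\equiv0\pmod 4$. Moreover $c_x^2\equiv c_x\pmod2$ gives $\sum_x c_x^2\equiv\sum_x c_x\equiv0\pmod2$, so $\sum_x c_x^2$ is even and $2m\sum_x c_x^2$ is a multiple of $4$. Both terms vanish modulo $4$, proving $\langle\lambda,\lambda\rangle\equiv0\pmod4$ for every $\lambda\in\Lambda_+$. The ``in particular'' assertion is then immediate: for $\gamma=\tfrac{1}{\sqrt2}\lambda\in\Gamma$ one has $\langle\gamma,\gamma\rangle=\tfrac12\langle\lambda,\lambda\rangle\in2\mathbb{Z}$, so $\Gamma$ is even. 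One may also note that \eqref{l+} is exactly the mod-$2$ shadow of this computation, obtained by expanding $\langle\lambda,x\rangle$ and using that $k$ and the off-diagonal entries are odd.

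The step to get right is the mod-$4$, as opposed to mod-$2$, bookkeeping of the off-diagonal contribution: one must check that the cross terms arising from the ``$-1$'' inner products group into complete unordered pairs and therefore contribute a multiple of $4$, not merely of $2$. The second subtlety, handled at the outset, is that the generating set $X$ is linearly dependent, so one works with a chosen even-sum representative while relying on the intrinsic nature of $\langle\lambda,\lambda\rangle$. I expect the hypothesis that $m(m+1)$ is not a multiple of $8$ to be what secures the finer $2$-adic structure (the description of $\Gamma^*/\Gamma$ in Lemma~\ref{lemma1}) in the generality of the cited reference; for the present specialized setting the norm congruence follows from the oddness of $k$ and of the equiangular inner products together with the even-coefficient-sum defining $\Lambda_+$.
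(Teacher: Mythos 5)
Your proof is correct, but there is no in-paper proof to compare it against: the paper quotes this statement verbatim from \cite[Lemma 3.7]{BMV04} without reproving it, so your argument stands in for the cited source. Judged on its own merits, every step checks out. You correctly handle the linear dependence of $X$ by fixing one even-sum representation and appealing to the intrinsic nature of $\langle\lambda,\lambda\rangle$; the off-diagonal reduction is valid because $\langle x,y\rangle-1\in\{0,-2\}$ for $x\neq y$ and the set of ordered pairs with inner product $-1$ is symmetric, so those cross terms group into unordered pairs each contributing a multiple of $4$; and the final reduction
\begin{equation*}
\langle\lambda,\lambda\rangle\equiv 2m\sum_{x\in X}c_x^2+\Big(\sum_{x\in X}c_x\Big)^2\pmod 4
\end{equation*}
vanishes because $\sum_x c_x$ is even and $\sum_x c_x^2\equiv\sum_x c_x\pmod 2$. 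Two remarks. First, as you yourself flag, your argument never uses the hypothesis that $m(m+1)$ is not a multiple of $8$: the mod-$4$ congruence holds for any system of vectors of common odd norm $k$ with pairwise odd inner products, once the coefficient sum is even. That hypothesis is carried along from the standing assumptions of \cite{BMV04} and of Theorem~\ref{th1}, and in the present paper's chain of reasoning it is genuinely needed only for Lemma~\ref{lemma1} (the computation of $\Gamma^*/\Gamma$), not for this norm congruence; proving the statement without it is harmless and in fact slightly stronger. Second, your closing observation that \eqref{l+} is the mod-$2$ shadow of the same expansion is accurate, since $\langle\lambda,x\rangle\equiv\sum_{y\in X} c_y\pmod 2$ for each $x\in X$; this also shows that the parity of the coefficient sum is an invariant of $\lambda$, which retroactively confirms that $\Lambda_+$ is a proper sublattice rather than all of $\Lambda$.
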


Now we can present a proof of Theorem \ref{th1}.

\begin{proof}[Proof of Theorem \ref{th1}]
We proceed by contradiction. Let $D=X\cup-X\subset\mathbb{S}^{d-1}(k)$ be a tight spherical $5$-design, where $X$ is a subset of $\mathbb{S}^{d-1}(k)$ such that $\frac{1}{\sqrt{k}} X$ is a maximal ETF in $\mathbb{R}^d$. Let $\Lambda$ be the lattice generated by $X$, and let $\Lambda^*$ denote the dual lattice of $\Lambda$. For the aim of contradiction, we assume that there exists $\alpha\in\Lambda^*$ such that $\langle \alpha,x\rangle\in\{0,\pm 1\}$ for all $x\in X$. 
By \eqref{sperical5design-23} and \eqref{sperical5design-43}, we have $\langle\alpha,\alpha\rangle=\frac{d+2}{3k}=\frac{2m+1}{3}$. Noting that $m\not\equiv1\ (\text{mod 3})$, we have
\begin{equation}\label{eq:contradiction}
\langle\alpha,\alpha\rangle=\frac{2m+1}{3}\notin\mathbb{Z}.
\end{equation}
We claim that, under our assumption on $m$, we have $\Lambda^*=\frac{1}{2}\Lambda_+$. Then, according to Lemma \ref{lemma2}, we have
\begin{equation*}
\langle \beta,\beta\rangle\in\mathbb{Z},\quad\forall \beta\in\Lambda^* .
\end{equation*}
This contradicts with \eqref{eq:contradiction}, so we prove that $D$ is not of minimal type.

It remains to show that $\Lambda^*=\frac{1}{2}\Lambda_+$.
According to equation \eqref{l+}, we have 
\begin{equation*}
\langle \frac{1}{\sqrt{2}}\lambda,\frac{1}{\sqrt{2}} x\rangle\in\mathbb{Z}, \ \forall \ \lambda\in \Lambda_+,\ x\in X,
\end{equation*}
implying that $\frac{1}{\sqrt{2}}\Lambda$ is in the dual lattice of $\Gamma=\frac{1}{\sqrt{2}}\Lambda_+$. Thus, we have 
\begin{equation*}
\Gamma=\frac{1}{\sqrt{2}}\Lambda_+
\subsetneqq \frac{1}{\sqrt{2}}\Lambda\subset\Gamma^*.
\end{equation*}
On the other hand, according to Lemma \ref{lemma1}, we have $\Gamma^*\slash\Gamma\cong\mathbb{Z}\slash2\mathbb{Z}$.
This means that if a sublattice of $\Gamma^*$ contains $\Gamma$ and is not equal to $\Gamma$, then it must be $\Gamma^*$ itself.
Therefore, we have 
$\frac{1}{\sqrt{2}}\Lambda=\Gamma^*$. Then, it follows that $(\frac{1}{\sqrt{2}}\Lambda)^*=(\Gamma^*)^*=\Gamma=\frac{1}{\sqrt{2}}\Lambda_+$. Since $(\frac{1}{\sqrt{2}}\Lambda)^*=\sqrt{2}\Lambda^*$, we obtain $\sqrt{2}\Lambda^*=\frac{1}{\sqrt{2}}\Lambda_+$, meaning that $\Lambda^*=\frac{1}{2}\Lambda_+$. 
This completes the proof.

\end{proof}

We prove that infinitely many integers $m$ satisfy the assumptions in Theorem~\ref{th1}.  
\begin{theorem}
    $|\{m\in \mathbb{N} \mid m\equiv1\pmod{2},m\not\equiv 1 \pmod{3},m+1\not \equiv 0\pmod{8},m(m+1)\text{ is square-free}\}|=\infty$. 
\end{theorem}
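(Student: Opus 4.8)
The plan is to reduce the four defining conditions to a single congruence condition together with square-freeness, and then run an elementary sieve.

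First, I would observe that the three congruence requirements ($m$ odd, $m\not\equiv 1\pmod 3$, $m+1\not\equiv 0\pmod 8$) are all captured by fixing $m$ in a suitable residue class modulo $\mathrm{lcm}(2,3,8)=24$. A convenient choice is $m\equiv 5\pmod{24}$: then $m$ is odd, $m\equiv 2\pmod 3$ so $m\not\equiv1\pmod3$, and $m+1\equiv 6\pmod{24}$ so $8\nmid m+1$. Moreover $m+1\equiv 6\pmod 8$ forces $m+1=2\cdot(\text{odd})$, while $m$ itself is odd, so the $2$-part of $m(m+1)$ is exactly $2$; in particular $4\nmid m(m+1)$, so square-freeness of $m(m+1)$ can never fail at the prime $2$ in this class. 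Thus it suffices to prove that infinitely many $m\equiv 5\pmod{24}$ have $m(m+1)$ square-free.

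Next, for the square-free count I would use a crude union-bound sieve rather than the full inclusion--exclusion. Since $\gcd(m,m+1)=1$, the product $m(m+1)$ fails to be square-free exactly when $p^2\mid m$ or $p^2\mid m+1$ for some prime $p$. For $x$ large, the number of $m\le x$ with $m\equiv 5\pmod{24}$ is $x/24+O(1)$. I would bound the number of \emph{bad} $m\le x$ in this class by summing, over primes $p$, the number of $m\le x$ with $m\equiv5\pmod{24}$ and $p^2\mid m$, plus those with $p^2\mid m+1$. The prime $2$ contributes nothing, as noted. For $p=3$ only $p^2\mid m+1$ is possible (since $m\equiv2\pmod3$), giving at most $x/72+O(1)$ values; for each $p\ge 5$, the conditions $p^2\mid m$ and $p^2\mid m+1$ each single out one residue class modulo $24p^2$ by the Chinese Remainder Theorem, contributing at most $2\bigl(x/(24p^2)+1\bigr)$ values. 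Summing and restricting to $p\le\sqrt{x+1}$ (the only primes that can occur), the bad count is at most
\[
\frac{x}{72}+\frac{x}{12}\sum_{p\ge 5}\frac{1}{p^2}+O(\sqrt{x}).
\]

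Finally I would check that this bad density is strictly below the density $1/24$ of the progression. Using the crude bound $\sum_{p\ge5}p^{-2}\le\sum_{\text{odd }n\ge5}n^{-2}=\pi^2/8-1-\tfrac19<0.1226$, the bad density is at most $\tfrac1{72}+\tfrac1{12}(0.1226)<0.025$, which is strictly less than $1/24\approx 0.0417$. Hence the number of good $m\le x$ in the class exceeds a positive constant times $x$ for all large $x$, so there are infinitely many such $m$, proving the claim. The main point to watch---the only place where the argument could break---is this final numerical inequality: the union bound is lossy, so one must confirm that the bad density genuinely stays below $1/24$. Should the crude constant be too large for some other choice of class, one can instead invoke the exact square-free density via M\"obius inversion, writing $\sum_{m\le x,\, m\equiv5\,(24)}\mu^2\bigl(m(m+1)\bigr)=Cx+o(x)$ with $C>0$ an explicit Euler product, at the cost of a standard but more careful error-term analysis over moduli $d$ up to $\sqrt{x}$.
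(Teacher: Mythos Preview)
Your proof is correct. Both you and the paper restrict to a single residue class modulo $24$ (you take $m\equiv 5\pmod{24}$; the paper takes $m\equiv 1\pmod 8$ and $m\equiv 0\pmod 3$, i.e.\ $m\equiv 9\pmod{24}$) and then control the square-free condition on $m(m+1)$. The paper, following Heath-Brown, carries out the full M\"obius inclusion--exclusion to obtain an asymptotic $f(x)\ge \tfrac{C}{24}x+O(\sqrt{x}\log x)$ with $C=\prod_{p\ge 5}(1-2p^{-2})$; this requires writing $E(m)E(m+1)=\sum_{i,j}\mu(i)\mu(j)$ over $i^2\mid m$, $j^2\mid m+1$, splitting the double sum at $ij\le\sqrt{x}$, bounding the tail via divisor-function estimates, and identifying the Euler product. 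Your union-bound sieve is strictly more elementary: you replace the exact count by a one-sided upper bound on the bad set and only need the numerical check $\tfrac{1}{72}+\tfrac{1}{12}\sum_{p\ge 5}p^{-2}<\tfrac{1}{24}$, which you verify cleanly. The trade-off is that you obtain only a positive lower density rather than the precise leading constant, but for the qualitative statement being proved that is entirely sufficient, and your argument is shorter and avoids the tail-estimate bookkeeping.
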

\begin{proof}
We follow the idea of proof of \cite[Theorem~2]{HB}. 

Let $f(x)=|\{m\in \mathbb{N} \mid m\leq x,m\equiv1\pmod{2},m\not\equiv 1 \pmod{3},m+1\not \equiv 0\pmod{8},m(m+1)\text{ is square-free}\}|$. 
Define a function $E$ on the set of positive integers by $E(n)=1$ if $n$ is square-free and $E(n)=0$ otherwise. 
Set $Z(x)=\{m\in \mathbb{N} \mid m\leq x, m\equiv1\pmod{2},m\not\equiv 1 \pmod{3},m+1\not \equiv 0\pmod{8},m(m+1)\text{ is square-free}\}$ and 
 $Z'(x)=\{m\in \mathbb{N} \mid m\leq x, m\equiv1\pmod{8},m\equiv 0 \pmod{3},m(m+1)\text{ is square-free}\}$.  
Then 
\begin{equation}\label{eq:o1}
f(x)=\sum_{m\in Z(x)}E(m)E(m+1)
\geq \sum_{m\in Z'(x)}E(m)E(m+1)
=\sum_{i,j\leq x}\mu(i)\mu(j)N(x,i,j), 	
\end{equation}
where $N(x,i,j)=|\{k\in\mathbb{N} \mid k\leq x, k\equiv0 \pmod{3}, k\equiv 1\pmod{8}, i^2|k,j^2|(k+1)\}|$ and $\mu$ denotes the M\"{o}bius function. 
By the Chinese remainder theorem,  
$N(x,i,j)=\frac{x}{24i^2j^2}-$ if $(i,j)=1$ and $N(x,i,j)=0$ otherwise. 
Let $y=\sqrt{x}$, and consider the sum in \eqref{eq:o1} with the terms with $ij\leq y$; 
\begin{align*}
    &x\sum_{ij\leq y,(i,6)=(j,6)=1}\frac{\mu(i)\mu(j)}{24i^2j^2}+O(\sum_{ij\leq y})\\
    &=x\sum_{ij\leq y,(i,6)=(j,6)=1}\frac{\mu(i)\mu(j)}{24i^2j^2}+O(\sum_{ij\leq y})\\
    &=x\sum_{(i,6)=(j,6)=1}\frac{\mu(i)\mu(j)}{24i^2j^2}+O(x\sum_{n> y}\frac{d(n)}{n^2})+O(\sum_{n\leq y}d(n))\\
    &=\frac{C}{24}x+O(xy^{-1}\log y)+O(y\log y)\\
    &=\frac{C}{24}x+O(\sqrt{x}\log x),
\end{align*}
where $C=\prod_{p\in \mathbb{P}\setminus\{2,3\}}(1-\frac{2}{p^{-2}})=0.82963\cdots$ and $\mathbb{P}$ denotes the set of the prime numbers.
Then the result follows. 
\end{proof}

\section{Antipodal spherical $4$-distance $5$-designs of minimal type}\label{section4}

In this section we consider the existence of antipodal spherical $4$-distance $5$-design of minimal type. It is well known that $D\subset \mathbb{S}^{d-1}$ forms an antipodal spherical $4$-distance $5$-design if and only if $D=X\cup -X$ where $X$ is a Levenstein-equality packing in $\mathbb{R}^d$ \cite[Example 8.4]{DGS77}. 
Recall that if $n>\frac{d(d+1)}{2}$ then the coherence $\mu(X)$ of a finite set $X=\{x_i\}_{i=1}^{n}\subset \mathbb{S}^{d-1}$ satisfies the Levenstein bound \cite{leven,leven2}:
\begin{equation}\label{leven}
\mu(X)\geq\alpha_{n,d}:= \sqrt{\frac{3n-d(d+2)}{(d+2)(n-d)}}.
\end{equation}
A set $X=\{x_i\}_{i=1}^{n}\subset \mathbb{S}^{d-1}$ attaining the Levenstein bound \eqref{leven} is called a Levenstein-equality packing with parameters $(d,n)$. 
For a Levenstein-equality packing $X$ with parameters $(d,n)$, we have $A(X)=\{0,\pm\alpha_{n,d}\}$ and $n\leq \frac{d(d+1)(d+2)}{6}$. 

\subsection{Equivalent conditions for antipodal spherical $4$-distance $5$-designs to be of minimal type}

We first give an equivalent condition when an antipodal spherical $4$-distance $5$-design is of minimal type.

\begin{theorem}\label{thm:levenstein}
Let $d>4$. 
The existence of the following are equivalent.
\begin{enumerate}
\item[{\rm (i)}] An antipodal spherical $4$-distance $5$-design in $\mathbb{S}^{d-1}$
of size $2n$ and of minimal type.
\item[{\rm (ii)}] Spherical $3$-designs $X_1,X_2,X_3$ in $\mathbb{S}^{d-2}$ with $X_1=-X_3,X_1\cap -X_1=\emptyset, X_2=-X_2$, $|X_1|=|X_3|=\frac{(d+2)n}{3d}, |X_2|=\frac{4(d-1)n}{3d}$, and 
\begin{align*}
&A(X_1),A(X_3)\subset\left\{-\frac{3}{d-1},\frac{(d+2)\cdot \alpha_{n,d}-3}{d-1},\frac{-(d+2)\cdot \alpha_{n,d}-3}{d-1} \right\},\ A(X_2)\subset \left\{-1,0,\pm\alpha_{n,d}\right\},\\
&A(X_1,X_2),A(X_2,X_3)\subset\left\{0,\pm \sqrt{\frac{d+2}{d-1}}\cdot \alpha_{n,d}\right\},\\
& A(X_1,X_3)\subset\left\{-1,\frac{3}{d-1},\frac{(d+2)\cdot \alpha_{n,d}+3}{d-1},\frac{-(d+2)\cdot \alpha_{n,d}+3}{d-1}\right\},
\end{align*}
where $\alpha_{n,d}$ is defined in \eqref{leven}.

\end{enumerate}

\end{theorem}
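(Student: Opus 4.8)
The plan is to follow the derived-code strategy used for Theorem \ref{thm:cc}, exploiting that an antipodal spherical $4$-distance $5$-design $D=X\cup -X$ of size $2n$ has angle set $A(D)\subset\{-1,0,\pm\alpha_{n,d}\}$ and differs from the tight (three-distance) case only by the additional inner product $0$. For the implication (i)$\Rightarrow$(ii), let $D$ be such a design of minimal type and let $\alpha\in\mathbb{R}^d$ satisfy $\langle\alpha,x\rangle\in\{0,\pm1\}$ for all $x\in D$. Applying the spherical $5$-design moment identities \eqref{sperical5design-2} and \eqref{sperical5design-4} with $y=\alpha$, and using that $\sum_{x\in D}\langle\alpha,x\rangle^2=\sum_{x\in D}\langle\alpha,x\rangle^4=n_1(\alpha)$, I would recover $\langle\alpha,\alpha\rangle=\frac{d+2}{3}$ exactly as in \eqref{sum}--\eqref{n0n1} but now with $|D|=2n$, which yields $n_1(\alpha)=\frac{2n(d+2)}{3d}$ and $n_0(\alpha)=\frac{4n(d-1)}{3d}$. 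After rotating so that $\alpha=\sqrt{(d+2)/3}\,(1,0,\dots,0)$, I set $X_1=\mathcal{L}_{\alpha,1}(D)$, $X_2=\mathcal{L}_{\alpha,0}(D)$, $X_3=\mathcal{L}_{\alpha,-1}(D)$; these are precisely the derived codes of Theorem \ref{thm:derived} for the unit vector $\alpha/\|\alpha\|$, whose three projection values are $\{0,\pm\sqrt{3/(d+2)}\}$. Since $\pm\alpha/\|\alpha\|\notin D$ and $|B|=3$, Theorem \ref{thm:derived}(ii) gives that each $X_i$ is a spherical $(5+1-3)=3$-design, the sizes follow from $n_0,n_1$, and antipodality of $D$ forces $X_3=-X_1$ and $X_2=-X_2$.

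The angle sets in (ii) would then be read off from Theorem \ref{thm:derived}(i), which bounds $A(X_i,X_j)$ by $\{(\gamma-\beta_i\beta_j)/\sqrt{(1-\beta_i^2)(1-\beta_j^2)}:\gamma\in A(D)\}$ with $\beta_i\in\{0,\pm\sqrt{3/(d+2)}\}$ and $\gamma\in\{-1,0,\pm\alpha_{n,d}\}$; substituting reproduces each stated inclusion. Here I would note that $\gamma=-1$ cannot occur within $X_1$ or between $X_1$ and $X_2$ (it would force a pair of antipodal points sharing the same $\alpha$-value), so it is dropped from $A(X_1)$ and $A(X_1,X_2)$, whereas it survives in $A(X_1,X_3)$ and maps back to $-1$. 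The disjointness $X_1\cap -X_1=\emptyset$ I would verify by observing that a common point would force two points $x,x'\in D$ with $x'=x-\tfrac{6}{d+2}\alpha$, whence $\langle x,x'\rangle=\frac{d-4}{d+2}$; for $d>4$ this value lies in $(0,1)$ and so cannot equal $-1$ or $0$, and ruling out the coincidence $\alpha_{n,d}=\frac{d-4}{d+2}$ gives the claim.

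For (ii)$\Rightarrow$(i), I would reverse the construction by lifting
\begin{equation*}
\widetilde X_1=\Big\{\big(\sqrt{\tfrac{3}{d+2}},\sqrt{\tfrac{d-1}{d+2}}\,x\big):x\in X_1\Big\},\quad \widetilde X_2=\{(0,x):x\in X_2\},\quad \widetilde X_3=-\widetilde X_1,
\end{equation*}
and setting $D=\bigcup_{i=1}^3\widetilde X_i\subset\mathbb{S}^{d-1}$. A direct inner-product computation, inverting the maps above, shows $A(D)\subset\{-1,0,\pm\alpha_{n,d}\}$, $|D|=2n$, and $D=-D$. Choosing a half $X$ of $D$ (namely $\widetilde X_1$ together with one representative from each antipodal pair of $\widetilde X_2$) gives $|X|=n$, and since the value $-1$ cannot occur within $X$, its coherence satisfies $\mu(X)\le\alpha_{n,d}$; the Levenstein bound \eqref{leven} then forces $\mu(X)=\alpha_{n,d}$, so $X$ is a Levenstein-equality packing with parameters $(d,n)$. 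By \cite[Example 8.4]{DGS77}, $D=X\cup -X$ is an antipodal spherical $4$-distance $5$-design, and it is of minimal type because $\alpha=\sqrt{(d+2)/3}\,(1,0,\dots,0)$ satisfies $\langle\alpha,p\rangle\in\{0,\pm1\}$ for every $p\in D$ by construction.

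The main obstacle is the reverse direction. The forward direction is essentially a bookkeeping exercise over the derived-code machinery, but proving that the glued configuration is a genuine spherical $5$-design---rather than merely a $4$-distance set assembled from three $3$-designs---is where the real content lies: it hinges on extracting a half of $D$ that meets the Levenstein bound with equality and then invoking the Levenstein-equality characterization of antipodal $4$-distance $5$-designs. The delicate points to control there are that no spurious inner product $-1$ appears within the chosen half and that $X_1\cap -X_1=\emptyset$, the latter being exactly the place where the hypothesis $d>4$ is needed.
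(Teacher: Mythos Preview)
Your proposal is correct and follows essentially the same approach as the paper: derived codes via Theorem~\ref{thm:derived} for (i)$\Rightarrow$(ii), and the lift followed by the Levenstein-equality characterization for (ii)$\Rightarrow$(i). One small slip: if $-x_1\in X_1$ corresponds to $x'\in D$, then $x'=-x+\tfrac{6}{d+2}\alpha$ (not $x-\tfrac{6}{d+2}\alpha$), so $\langle x,x'\rangle=\tfrac{4-d}{d+2}\in(-1,0)$ for $d>4$; the argument still goes through since this value cannot be $-1$, $0$, or $\alpha_{n,d}>0$, and the residual coincidence to exclude is $-\alpha_{n,d}=\tfrac{4-d}{d+2}$ (the paper in fact omits this verification of $X_1\cap -X_1=\emptyset$ altogether).
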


\begin{proof}
(i)$\Rightarrow$(ii):
Let $D=X\cup -X$ be an antipodal spherical $4$-distance $5$-design in $\mathbb{S}^{d-1}$ of minimal type, where $X$ is a Levenstein-equality packing with size $n$. 
Since $D$ is of minimal type, there exists  $ \alpha\in \mathbb{R}^d$ such that $\langle  \alpha, x\rangle\in \{0,\pm1\}$ for all $x\in D$.
By a similar calculation used in Theorem \ref{thm:cc} (i), we have $\langle \alpha,\alpha\rangle=\frac{d+2}{3}$ and
\begin{equation*}
n_0( \alpha)=\frac{4(d-1)n}{3d}\quad\text{and}\quad n_1( \alpha)=\frac{2(d+2)n}{3d}, 
\end{equation*}
where $n_\ell(\alpha):=|\{x\in D \mid \langle \alpha,x\rangle=\pm \ell\}|$, $l\in\{0,1\}$.
After suitably transforming the set $D$ and the vector $\alpha$, we may assume that $\alpha=(\sqrt{\frac{d+2}{3}},0,\ldots,0)$.
Let $X_1=\mathcal{L}_{\alpha,1}(D)$, $X_2=\mathcal{L}_{\alpha,0}(D)$ and $X_3=\mathcal{L}_{\alpha,-1}(D)$.
Since $D$ is antipodal, it follows that $|X_1|=|X_3|=n_1( \alpha)/2$ and $|X_2|= n_0( \alpha)$. By Theorem \ref{thm:derived} we see that each $X_i$ is a spherical $3$-design in $\mathbb{S}^{d-2}$, and their angle sets satisfy the desired conditions in (ii).

(ii)$\Rightarrow$(i): We proceed the converse implication above. 
Set $D=\bigcup_{i=1}^3 \widetilde{X}_i$, where 
\begin{equation*}
\widetilde{X}_1=\bigg\{\Big(\sqrt{\tfrac{3}{d+2}},\sqrt{\tfrac{d-1}{d+2}}\cdot x\Big) \ \Big|\  x\in X_1\bigg\},\  \widetilde{X}_2=\bigg\{(0,x) \ \Big|\  x\in X_2\bigg\},\ \widetilde{X}_3=-\widetilde{X}_1.
\end{equation*}
Then $D$ is an antipodal spherical $4$-distance set in $\mathbb{S}^{d-1}$ with $A(D)=\{-1,0,\pm\alpha_{n,d}\}$ and $|D|=2n$. Since a half of $D$ attains the Levenstein bound \eqref{leven}, $D$ forms an antipodal spherical $4$-distance $5$-design in $\mathbb{S}^{d-1}$. 
This completes the proof.
\end{proof}

\begin{remark}
Set 
\begin{align*}
    \left(\alpha_{k}^{(1,1)}\right)_{k=1}^3&=\left(-\frac{3}{d-1},\frac{(d+2)\cdot \alpha_{n,d}-3}{d-1},\frac{-(d+2)\cdot \alpha_{n,d}-3}{d-1}\right),\\
    \left(\alpha_{k}^{(2,2)}\right)_{k=1}^3&=\left(0,\alpha_{n,d},-\alpha_{n,d}\right),\\
    \left(\alpha_{k}^{(i,j)}\right)_{k=1}^3&=\left(0, \sqrt{\frac{d+2}{d-1}}\cdot \alpha_{n,d},- \sqrt{\frac{d+2}{d-1}}\cdot \alpha_{n,d}\right)\quad \text{ for }(i,j)\in\{(1,2),(2,1)\}. 
\end{align*}
Write a valency $p_{\alpha}^j$ from a point in $X_i$ as $p_{i,j,k}$ where $\alpha=\alpha_k^{(i,j)}$.   
By applying Theorem~\ref{thm:di} to $X_1$ and $X_2$ in Theorem~\ref{thm:levenstein}, we obtain that $X_1$ and $X_2$ are distance invariant and the following valencies: 
\begin{align*}
    p_{1,1,1}&=\frac{(d-1) n \left(d^2+d-2 n\right)}{3 d (d (d+2)-3 n)},\\
    p_{1,1,2}&=\frac{(n-d) (d (-3 d+n-6)+8 n)-3 (d+2) (d-n)\alpha_{n,d})}{6 d (3 n-d (d+2))}, \\
    p_{1,1,3}&=\frac{(n-d) (d (-3 d+n-6)+8 n)+3 (d+2)  (d-n)\alpha_{n,d})}{6 d (3 n-d (d+2))},\\
    p_{1,2,1}&=\frac{4 (d-1) n \left(d^2+d-2 n\right)}{3 d (d (d+2)-3 n)},\\
    p_{1,2,2}&=p_{1,2,3}=\frac{2 (d-1) n (d-n)}{3 d (d (d+2)-3 n)},\\
    p_{2,1,1}&=\frac{(d+2) n \left(d^2+d-2 n\right)}{3 d (d (d+2)-3 n)},\\
    p_{2,1,2}&=p_{2,1,3}=\frac{(d+2) n (d-n)}{6 d (d (d+2)-3 n)},\\
    p_{2,2,1}&=\frac{2 (2 d-5) n \left(d^2+d-2 n\right)}{3 d (d (d+2)-3 n)},\\
    p_{2,2,2}&=p_{2,2,3}=-\frac{(d+2) (3 d-2 n) (d-n)}{3 d (d (d+2)-3 n)}.
\end{align*}
Hence, if one of the above valencies is not integal, then there does not exist an antipodal spherical $4$-distance $5$-design in $\mathbb{S}^{d-1}$ of minimal type that has size $2n$. 
\end{remark}

\subsection{List of all known antipodal spherical $4$-distance $5$-designs of minimal type}

Throughout this section, we let $D\subset\mathbb{R}^d$ be an antipodal spherical $4$-distance $5$-design.
Then $D$ has the form $D=X\cup -X$, where $X\subset\mathbb{S}^{d-1}$ is a Levenstein-equality. 
Table \ref{Levenstein-equality packing} lists all known constructions of antipodal spherical $4$-distance $5$-designs (see also \cite{Haas,Mun06}).
It is worth noting that only finitely many constructions are known when $|D|\neq d(d+2)$.
In what follows, we examine whether these known examples are of minimal type.

We first consider the case when $D$ forms a tight spherical $7$-design. 
In this case, we have $|D|=\frac{d(d+1)(d+2)}{3}$.
It is well known that tight spherical $7$-designs may exist only when $d=3k^2-4$, $k\in\mathbb{Z}$, and the existence is known only for $d=8$ and $d=23$ \cite{BMV04,BB09b}. 
In the following theorem we show that $D$ is not of minimal type.

\begin{theorem}\label{tight7design}
Let $d>1$.
Then tight spherical $7$-designs in $\mathbb{R}^d$ are not of minimal type.
	
\end{theorem}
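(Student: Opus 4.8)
The plan is to argue by contradiction using the even-moment identities that a spherical $7$-design must satisfy, evaluated at the distinguished vector $\alpha$ itself. Suppose a tight spherical $7$-design $D=X\cup -X$ in $\mathbb{R}^d$ were of minimal type, so that there is $\alpha\in\mathbb{R}^d$ with $\langle\alpha,x\rangle\in\{0,\pm1\}$ for every $x\in D$. I would first dispose of the degenerate case: since $D$ is in particular a spherical $2$-design, the matrix $\sum_{x\in D}xx^\top=\tfrac{|D|}{d}I$ is nonsingular and hence $D$ spans $\mathbb{R}^d$; thus if $n_1(\alpha):=|\{x\in D:\langle\alpha,x\rangle=\pm1\}|$ vanished then $\alpha$ would be orthogonal to all of $\mathbb{R}^d$, forcing $\alpha=0$ and making the minimal-type condition vacuous. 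So I may assume $\alpha\neq0$ and $n_1(\alpha)>0$.

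The key step is to feed $y=\alpha$ into the degree-$2$, $4$, and $6$ moment identities valid for a spherical $7$-design, namely $\frac{1}{|D|}\sum_{x\in D}\langle y,x\rangle^{2k}=\frac{(2k-1)!!}{d(d+2)\cdots(d+2k-2)}\langle y,y\rangle^{k}$ for $k\in\{1,2,3\}$; the cases $k=1,2$ are a rescaling of \eqref{sperical5design-2} and \eqref{sperical5design-4}, and the case $k=3$ is available because a tight $7$-design integrates every polynomial of degree $\le 7$ exactly. The point is that $\langle\alpha,x\rangle\in\{0,\pm1\}$ forces $\langle\alpha,x\rangle^{2}=\langle\alpha,x\rangle^{4}=\langle\alpha,x\rangle^{6}$ pointwise, so all three left-hand sides collapse to $n_1(\alpha)/|D|$. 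Writing $A=\langle\alpha,\alpha\rangle$, this yields $\frac{A}{d}=\frac{3A^{2}}{d(d+2)}=\frac{15A^{3}}{d(d+2)(d+4)}$.

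Finally I would extract the contradiction. Cancelling the nonzero factor $n_1(\alpha)/|D|$ (equivalently $A\neq0$), comparison of the first two terms gives $A=\tfrac{d+2}{3}$, while comparison of the first and third gives $A^{2}=\tfrac{(d+2)(d+4)}{15}$. Substituting the former into the latter reduces to $5(d+2)=3(d+4)$, i.e.\ $d=1$, contradicting $d>1$; hence no such $\alpha$ exists and $D$ is not of minimal type. I expect the only genuinely delicate points to be bookkeeping ones rather than substantive obstacles: confirming that the minimal-type vector may be taken nonzero (handled by the spanning argument above) and confirming that the degree-$6$ identity is legitimately in force for a tight $7$-design. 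Unlike Theorem~\ref{th1}, no arithmetic or lattice input is needed here --- the obstruction is simply the rigidity of three simultaneous moment equations, which over-determine $\langle\alpha,\alpha\rangle$ and become consistent only at $d=1$.
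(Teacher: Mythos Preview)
Your argument is correct and is essentially the same as the paper's: both proceed by contradiction, evaluate the degree-$2$, $4$, and $6$ moment identities of a spherical $7$-design at $y=\alpha$, use $\langle\alpha,x\rangle\in\{0,\pm1\}$ to collapse all three left-hand sides to $n_1(\alpha)$, and obtain incompatible constraints on $\langle\alpha,\alpha\rangle$ unless $d=1$. Your version is marginally more careful in explicitly ruling out $\alpha=0$, and you phrase the contradiction as two different values of $A=\langle\alpha,\alpha\rangle$ whereas the paper phrases it as two different values of $n_1(\alpha)$, but these are trivially equivalent reformulations of the same computation.
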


\begin{proof}
We prove by contradiction.
Let  $D\subset\mathbb{S}^{d-1}$ be a tight spherical $7$-design with size $|D|=\frac{d(d+1)(d+2)}{3}$. Then for any $y\in\mathbb{R}^d$ we have \cite{BMV04}
\begin{subequations}
\begin{align}
\sum_{x\in D}\langle y,x \rangle^2&=\frac{1}{d}\cdot |D|\cdot \langle y,y \rangle,\label{sperical5design-22} \\
\sum_{x\in D}\langle y,x \rangle^4&=\frac{3}{d(d+2)}\cdot |D|\cdot\langle y,y \rangle^2,\label{sperical5design-42}\\
\sum_{x\in D}\langle y,x \rangle^6&=\frac{15}{d(d+2)(d+4)}\cdot |D|\cdot\langle y,y \rangle^3.\label{sperical5design-62}
\end{align}
\end{subequations}
Assume that $D$ is of minimal type. 
Then there exists $\alpha\in\mathbb{R}^d$  
such that $\langle  \alpha, x\rangle\in \{0,\pm1\}$ for all $x\in D$. Define $n_\ell(\alpha):=|\{x\in D \mid \langle \alpha,x\rangle=\pm \ell\}|$ for $l\in\{0,1\}$.
By equation \eqref{sperical5design-22} and \eqref{sperical5design-42} we have $\langle \alpha,\alpha\rangle=\frac{d+2}{3}$ and $n_1( \alpha)=\frac{d+2}{3d}\cdot |D|$.
Substituting $y=\alpha$ and $\langle \alpha,\alpha\rangle=\frac{d+2}{3}$ into \eqref{sperical5design-62} we obtain
\begin{equation*}
n_1(\alpha)=\frac{15}{d(d+2)(d+4)}\cdot |D|\cdot(\frac{d+2}{3})^3	=\frac{5(d+2)^2}{9d(d+4)}\cdot |D|,
\end{equation*}
which contradicts with $n_1( \alpha)=\frac{d+2}{3d}\cdot |D|$ when $d>1$. 
Therefore, $D$ is not of minimal type.
	
\end{proof}

We next dealt with the remaining case where $4\leq d\leq 23$. 

\begin{example}\label{antipodal-example}
Assume $(d,|D|)\in \{(4,24), (6,72),(7,126),(8,240),(16,288),(22,2816),(23,4600)\}$. 
In each of these cases, the set $D$ corresponds to  the set of the shortest vectors of a strongly perfect lattice, as listed in Table \ref{Levenstein-equality packing}.
These strongly perfect lattices are of minimal type,  since their Berg\'e-Martinet invariants achieve the lower bound \eqref{minimal type lattice} \cite{Mar13,HN20,CS88}.
Hence, by Remark \ref{remark-strongly perfect lattices},  the associated antipodal spherical $4$-distance $5$-designs are also of minimal type.
\end{example}

\begin{example}
\label{example-16-144}
Assume $(d,|D|)=(16,288)$.
In this case, a half of $D$ forms the maximal mutually unbiased bases, which consists of the vectors of the standard basis and vectors obtained from the Nordstrom-Robinson code by changing $0$ by $-1$ \cite{CCKS97}. 
Therefore, one half of $D$ can be represented as the columns of the matrix $X=[B_0, B_1,\ldots, B_8]\in\mathbb{R}^{16\times 144}$.
Here, $B_0=I$ is the identity matrix, and $B_i=[b_{i,1},\ldots,b_{i,16}]\in \{\pm \frac{1}{4}\}^{16\times 16}$ satisfies $B_i^\top B_i=I$ for each $1\leq i\leq 8$.
We now prove that the set $D$ is not of minimal type by contradiction. 
Suppose, for contradiction, that $D$ is of minimal type. 
Then there exists a vector $\alpha\in\mathbb{R}^{16}$ such that $\langle \alpha, x\rangle\in\{0,\pm 1\}$ for any $x\in D$.
Since $D$ is constructed from the Nordstrom-Robinson code, $\alpha$ must lie in the set
\begin{equation*}
S:=\{ y=(y_1,\ldots,y_{16})\in\mathbb{R}^{16}\mid  y_i\in\{0,\pm 1\},\ \forall 1\leq i\leq 16\  \text{and}\  \|y\|_2^2=6 \}.
\end{equation*}
However, through exhaustive enumeration, one can verify that no vector $\widetilde{\alpha}\in S$ satisfies $\langle \widetilde{\alpha}, b_{i,j} \rangle\in\{0,\pm 1\}$ for each $1\leq i\leq 8$ and for each $1\leq j\leq 16$.
This contradiction implies that such a vector $\alpha$ does not exist, and therefore, $D$ is not of minimal type.

\end{example}

When $n=d(d+2)$ and $d=4^s$ for some integer $s\geq 1$, antipodal spherical $4$-distance $5$-designs are equivalent to maximal real mutually unbiased bases (MUBs). 
Such configurations are known to exist for each integer $s\geq 1$ \cite{CCKS97}. 
Example \ref{antipodal-example} and Example \ref{example-16-144} show that an antipodal spherical $4$-distance $5$-design $D$ is of minimal type when $d=4$, and not of minimal type when $d=16$.
It remains an open question whether $D$ is of minimal when $d=4^s>16$.

\bigskip
\noindent
\textbf{Acknowledgments.}
Sho Suda is supported by JSPS KAKENHI Grant Number 22K03410. 
Wei-Hsuan Yu is supported by  MOST  in  Taiwan  under  Grant  MOST109-2628-M-008-002-MY4.

\Addresses
\end{document}